\theoremstyle{plain}
\newtheorem{algorithm}{Algorithm}[section]
\newtheorem{corollary}[algorithm]{Corollary}
\newtheorem{definition}[algorithm]{Definition}
\newtheorem{observe}[algorithm]{Observation}
\newtheorem{lemma}[algorithm]{Lemma}
\newtheorem{main}{Theorem}
\newtheorem*{theorem*}{Theorem}
\newtheorem{theorem} [algorithm] {Theorem}
\newtheorem{proposition}[algorithm]{Proposition}
\newtheorem{remark}[algorithm]{Remark}
\newtheorem{Codim2}[algorithm]{Codimension Two Lemma}
\numberwithin{equation}{algorithm}
\newtheorem*{1/2MSR}{Half-Maximal Symmetry Rank Theorem}
\newtheorem*{corb}{Corollary C}
\newtheorem*{Borel}{Borel Formula}
\newtheorem*{CL}{Connectedness Lemma}
\newtheorem*{PL}{Periodicity Lemma}
\newtheorem*{JB}{Johnson Bound}
\newtheorem*{SE}{Stirling's Inequalities}
\newcommand{\Z}{\mathbb{Z}}
\newcommand{\C}{\mathbb{C}}
\newcommand{\RP}{\mathbb{R}\mathrm{P}}
\newcommand{\CP}{\mathbb{C}\mathrm{P}}
\newcommand{\HP}{\mathbb{H}\mathrm{P}}
\DeclareMathOperator{\codim}{codim}
\newcommand{\ceil}[1]{\left\lceil #1 \right\rceil}
\newcommand{\floor}[1]{\left\lfloor #1 \right\rfloor}
\title{$\Z_p$-torus actions on positively curved manifolds} 
\author{Muhammad Abdullah} 
\address{Department of Mathematics, 
Wichita State University, Wichita, KS 67260 USA}
\email{mxabdullah2@shockers.wichita.edu}
\author{Catherine Searle}\address{Department of Mathematics, Wichita State University, Wichita, KS 67260 USA}\email{catherine.searle@wichita.edu}
\date{\today}
\subjclass{53C21; 20K01}
\begin{document}
\begin{abstract}  

In this article, we study closed, positively curved $n$-manifolds that admit an effective, isometric $\mathbb{Z}_p^r$-action with a fixed point, where $p$ is an odd prime.
For all sufficiently large $n$, we obtain a symmetry-rank bound in Theorem~\ref{main2} that improves the 
$3n/8$ bound of Fang and Rong \cite{FR} and Ghazawneh \cite{Gh}. We improve on this bound for small odd primes $3\leq p\leq 19$ in Theorem~\ref{main}. One of our main tools comes from the theory of error-correcting codes and is of independent interest: we derive  a finite-length Plotkin bound and a finite-length Elias-Bassalygo bound for $q$-ary codes and show that the finite-length Plotkin bound is asymptotically sharper for all primes $p \ge 23$.

\end{abstract}
\maketitle
\vspace{-1cm}
\section{Introduction}

A long-standing problem in Riemannian geometry is the classification of closed manifolds of positive sectional curvature. To date, apart from special examples in dimensions less than or equal to 24, all known examples are spherical in nature and highly symmetric. 
The Symmetry Program suggests that one way to approach this classification is to study manifolds with ``large" symmetries, with the goal of gradually decreasing the amount of symmetry. Abelian groups are a natural choice to study in this context and the case of torus actions has been studied extensively: see, for example, the work of Grove and Searle \cite{GS}, Fang and Rong \cite{FR2}, Wilking \cite{Wilk}, and of Kennard, Wiemeler, and Wilking \cite{KWW1, KWW2}.

Consistent with the goal of the Symmetry Program, it is natural to study $\mathbb{Z}_p^r$-actions. Previous work on positively and non-negatively curved Riemannian manifolds with discrete symmetries of dimension 4 can be found in Yang \cite{Y}, Hicks \cite{Hi}, Fang \cite{F}, and Kim and Lee \cite{KL}, and of higher dimensions in Fang and Rong \cite{FR}, Su and Wang \cite{SW}, Wang \cite{W}, and more recently in Kennard, Khalili Samani, and Searle \cite{KKSS}, and Ghazawneh \cite{Gh1, Gh}. 

In all of these works, with the exception of the last three, the prime $p$ is assumed to be sufficiently large so that $\Z_p^r$ admits a fixed point. 
In contrast, in \cite{KKSS}, \cite{Gh1}, and \cite{Gh}, they consider $\mathbb{Z}_p^r$-actions with the additional assumption that there exists a fixed point. We make the same assumption in this paper, but only consider odd primes. 
The following is our main theorem, giving an improvement on the lower bound on $r$ of approximately $3/8$ of the dimension of the manifold obtained in \cite{FR} and \cite{Gh} for all odd primes, see also Theorem \ref{Gh+Zp}.

\begin{main}
\label{main2}
Assume $\Z_p^r$ acts effectively and isometrically on a closed, $n$-dimensional, positively
curved manifold $M$ with a fixed-point $x\in M$, where $p \ge 3$ and $n \geq 13$. 
Suppose
\begin{equation}\label{eq:plotkin-rank}
r \;>\; F_1(n,p)=\frac{3p-4}{8(p-1)}\,n \;+\; \log_p(n) \;+\; \log_p\!\left(\frac{p^{3}}{8}\right).
\end{equation}
Then $M$ is homotopy equivalent to $S^n$, $\RP^n$, $\CP^{n/2}$, or a lens space.
\end{main}

We further improve the bound for small primes in the next theorem.

\begin{main} \label{main}
Assume $\Z_p^r$ acts effectively and isometrically with a fixed point on a closed, positively
curved, $n$-dimensional manifold $M$, where $p$ is a prime, $3\leq p\leq 19$, and $n \geq 16$.
Suppose
\begin{equation}\label{bound}
r \;>\; F_2(n, p)=f_1(p)\,n \;+\; 2.5 \log_p n \;+\; f_2(p)
   \;+\; \frac{f_3(p)}{\,n-1\,}
   \;+\; \frac{f_4(p)}{\,f_5(p)(n-1) - 2},
\end{equation}
where the
$f_i(p)$, given in Display \ref{constants_f}, are functions depending on $\log_p$ terms and the entropy and Johnson functions, given in Definition \ref{entropy}. 
Then $M$ is homotopy equivalent to $S^n$, $\RP^n$, $\CP^{n/2}$, or a lens space.
\end{main}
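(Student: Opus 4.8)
\noindent
The plan is to use the isotropy representation at a fixed point to encode the action as a linear code over $\F_p$, to extract from that code --- via the finite-length Elias--Bassalygo bound --- a totally geodesic fixed-point set of small codimension, and then to propagate cohomological information back to $M$ with Wilking's Connectedness Lemma, the Periodicity Lemma and Poincar\'e duality; the induction on dimension bottoms out in low dimensions, where \eqref{bound} already forces $r$ past the earlier Fang--Rong/Ghazawneh bound and those classifications apply.

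\smallskip\noindent
\emph{The isotropy code.} Fix $x_0$ with $\Z_p^r\cdot x_0=x_0$. Since $p$ is odd, the (faithful, by effectiveness: a nontrivial kernel would fix a neighborhood of $x_0$, hence all of connected $M$) isotropy representation on $T_{x_0}M$ splits orthogonally as $T_{x_0}F\oplus V_1\oplus\cdots\oplus V_m$, with $F$ the fixed-point component through $x_0$, each $V_j$ a rotation plane with nonzero weight $v_j\in\widehat{\Z_p^r}\cong\F_p^r$, and $m=\tfrac12(n-\dim F)$; the $v_j$ span $\F_p^r$, so the $r\times m$ matrix $[v_1\,|\,\cdots\,|\,v_m]$ has rank $r$ and its row space is a $p$-ary code $C$ of length $m$, dimension $r$. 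The codeword of $g\in\Z_p^r$ has weight $\#\{j:\langle g,v_j\rangle\ne0\}$, which is exactly half the codimension of the component $M_g$ of $\operatorname{Fix}(\langle g\rangle,M)$ through $x_0$; and $M_g$ is closed, totally geodesic (hence positively curved), contains $x_0$, and carries a residual effective action of an elementary abelian $p$-group whose rank is at least $r$ minus that weight.

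\smallskip\noindent
\emph{Finding a small fixed-point set.} If $C$ had no codeword of weight below $\tfrac14$ of its length, its relative minimum distance would be $\ge\tfrac14$, and the finite-length Elias--Bassalygo bound would give $r\le m\bigl(1-H_p(J_p(\tfrac14))\bigr)+\cdots=f_1(p)\,n+\cdots\le F(n,p)$ (here $H_p$ and $J_p$ denote the $p$-ary entropy and Johnson functions), contradicting \eqref{bound}. So $C$ has a codeword of relative weight $<\tfrac14$, i.e.\ a closed, totally geodesic, positively curved $N\subset M$ of relative codimension $<\tfrac14$ carrying a large residual elementary abelian $p$-action. One then checks that this residual action again satisfies a bound of the form \eqref{bound} for $\dim N$ --- this is where the exact shape of $F$ is used: $f_1(p)=\tfrac12\bigl(1-H_p(J_p(\tfrac14))\bigr)$ absorbs the loss of dimension, and the $2.5\log_p n$ and rational terms absorb the loss of rank and the finite-length corrections --- so by induction on dimension $N$ is homotopy equivalent to $S^{\dim N}$, $\RP^{\dim N}$, $\CP^{\dim N/2}$, or a lens space; the base of the induction is the range of small $n$ in which \eqref{bound} already implies the Fang--Rong/Ghazawneh hypothesis. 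The restriction $3\le p\le 29$ enters precisely here: it is the range of primes for which $f_1(p)$, together with its corrections, stays below the earlier $\approx 3n/8$ bound; for much larger $p$, $H_p\circ J_p$ tends to the identity and the improvement disappears.

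\smallskip\noindent
\emph{Transfer to $M$, and the main obstacle.} By the Connectedness Lemma the inclusion $N\hookrightarrow M$ is $(n-2k+1)$-connected, where $k=\codim_M N<\tfrac14 n$, hence an isomorphism on $\pi_1$ and on integral cohomology past the middle dimension of $M$; Poincar\'e duality on $M$ supplies the remaining cohomology groups, and the Periodicity Lemma pins down the cup-product structure (needed when $N$ is a homotopy $\CP$). The standard cohomological recognition of spherical space forms and of $\CP^{n/2}$ then forces $M$ to be homotopy equivalent to $S^n$, $\RP^n$, $\CP^{n/2}$, or a lens space, with the quaternionic and octonionic projective spaces never arising since they are absent from the base case and each step only reproduces the type one level below. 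The hard part is the middle step: the rank of the residual action on $N$ can drop by as much as the codimension, so keeping \eqref{bound} valid through the reduction --- and, if it cannot be maintained in a single descent, through a chain of such fixed-point sets --- is a delicate rank-versus-dimension accounting, and it is exactly to make this close with the optimal leading constant $f_1(p)$ that $F$ is given the explicit finite-length shape of \eqref{bound}, and that one needs the \emph{finite-length}, rather than asymptotic, Elias--Bassalygo bound.
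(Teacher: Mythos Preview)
Your strategy matches the paper's: isotropy code, finite-length Elias--Bassalygo to locate a small-codimension fixed-point set, induction on dimension anchored by the $\approx 3n/8$ result, and transfer via connectedness and periodicity. But there is a genuine gap in the inductive step.

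The residual-rank bound you state, ``at least $r$ minus the weight,'' is correct but far too weak to run the induction: it would require $F(n,p)-F(n-k,p)\ge k/2$, whereas the linear coefficient satisfies $f_1(p)<\tfrac38$, so $F(n,p)-F(n-k,p)\approx k\,f_1(p)<k/2$ and the step never closes. What one must actually use is that if $N$ is chosen of \emph{maximal} dimension among all fixed-point components through $x_0$, then the kernel of the induced action on $N$ is exactly a single $\Z_p$, so the residual rank is $r-1$; the paper does this explicitly. Even with rank $r-1$, however, the inequality $r-1>F(n-k,p)$ fails when $k=2$, since $F(n,p)-F(n-2,p)\approx 2f_1(p)<1$, and your suggested ``chain of fixed-point sets'' cannot help: each link loses one unit of rank while $F$ drops by less than one. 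The paper resolves this by extracting a \emph{second} independent element $\tau_2\notin\langle\tau_1\rangle$ with $\codim M^{\tau_2}\le\tfrac{n+1}{3}$ (a second application of Elias--Bassalygo, now with $\delta=\tfrac13$, together with a monotonicity check $r(\tfrac13)<r(\tfrac14)$). If both codimensions are $2$, the two fixed-point sets are transverse and the Codimension Two Lemma finishes; otherwise $k_2\ge 4$, one inducts on $N_2$, and the lift to $M$ goes through a two-submanifold periodicity transfer (Proposition~\ref{cohm2}/Lemma~\ref{combo}) that only needs $4k_1\le n+3$ and $k_1+2k_2\le n+1$, not $k_2\le\tfrac{n+3}{4}$.
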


We show that the bound in Theorem \ref{main2} is stronger than the one obtained in Theorem \ref{main}  
whenever $p \ge 23$, and for each prime $p \ge 3$ and sufficiently large values of $n$, it eventually becomes stronger than the $3n/8$ estimate  appearing in Theorem~\ref{Gh+Zp}.  We also see, in Proposition \ref{LambertW}, that the approximate crossover occurs at the explicit Lambert-$W$ quantity
\[
N_{0}(p)
=
-\frac{8(p-1)}{\ln p}\,
W_{-1}\!\left(
-\,\frac{\ln p}{(p-1)p^{3}}
\right),
\]
after which the Plotkin inequality is strictly smaller than the bound in
Theorem~\ref{Gh+Zp}.  Thus, for $n \ge \lceil N_{0}(p)\rceil$, the
Plotkin estimate yields an improvement over the 
estimate $3n/8$ appearing in Theorem~\ref{Gh+Zp}.

  By comparison, the bound obtained in Theorem \ref{main} satisfies $$\frac{n}{4}<F_2(n, p)\leq \frac{\sqrt{3}}{4}<\frac{7n}{16},$$ 
for sufficiently large $n$ and any $p$, so the bound is always better than the maximal symmetry rank bound of $n/2$ for $\Z_p^r$ actions in \cite{FR} and \cite{Gh}. 
Moreover, for $3 \le p \le 19$ and for sufficiently large dimensions depending on $p$, we have that $F_2(n, p)\leq c(p)n$ for $n\geq n(p)$. Likewise, for $ 23\leq p\leq 43$ and for $n\geq n(p)$, we have that $F_1(n,p) \leq c(p)n$.  The explicit values of  $c(p)$ and $n(p)$ are given in the following table.

\begin{table}[h] \label{tbl}
\centering
\renewcommand{\arraystretch}{2.0}
\begin{adjustbox}{max width=\textwidth}
\begin{tabular}{|c||c|c|c|c|c|c|c|c|c|c|c|c|c|c|}
\hline
$p$
& 3 & 5 & 7 & 11 & 13 & 17 & 19 & 23 & 29 & 31
& 37 & 41 & 43 & 47 \\
\hline\hline
$c(p)$
& $\frac{9}{32}$
& $\frac{8}{25}$
& $\frac{17}{50}$
& $\frac{71}{200}$
& $\frac{179}{500}$
& $\frac{73}{200}$
& \multicolumn{2}{|c|}{$\frac{37}{100}$}
& \multicolumn{2}{|c|}{$\frac{13}{35}$}
& \multicolumn{4}{|c|}{$\frac{149}{400}$} \\
\hline
$n(p)$
& 1908
& 5177
& 2496
& 4302
& 27{,}390
& 16{,}944
& 2661
& 7608
& 5536
& 8447
& 6400
& 9100
& 12100
& 23332 \\
\hline
\end{tabular}
\end{adjustbox}
\caption{Values for $c(p)$ and $n(p)$ for small primes}
\label{tab:placeholder}
\end{table}

Thus, we obtain the following corollary.

\begin{corb} \label{corb}
Assume $\mathbb{Z}_p^r$ acts effectively and isometrically on a closed, positively
curved, $n$-dimensional manifold $M$ with a non-empty fixed-point set, where $p \leq 47$ is an odd prime.
Suppose further that $n \geq n(p)$ and $$r \geq c(p)\,n$$  
where $c(p)$ and $n(p)$ are  as in Table \ref{tab:placeholder},
then $M$ is homotopy equivalent to $S^n$, $\RP^n$, $\CP^{n/2}$, or a lens space.
\end{corb}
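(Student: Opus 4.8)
\emph{Proof proposal.}
The plan is to obtain Corollary B as a direct consequence of Theorem \ref{main}. Under the hypotheses of the corollary we have $p\in\{3,5,7,11,13,17,19,23,29\}$ and $n\ge n_0(p)\ge 1908>16$, and "non-empty fixed-point set'' is the same hypothesis as "with a fixed point''; so every assumption of Theorem \ref{main} except the inequality \eqref{bound} is automatic. Thus it suffices to prove the purely analytic claim that
\[
F(n,p)\;<\;c(p)\,n\qquad\text{for all } n\ge n_0(p),
\]
with $c(p),n_0(p)$ as in \eqref{candn0}: once this holds, $r\ge c(p)\,n>F(n,p)$, so Theorem \ref{main} applies and yields that $M$ is homotopy equivalent to $S^n$, $\RP^n$, $\CP^{n/2}$, or a lens space. (This strict inequality refines the observation $F(n,p)\le c(p)n$ recorded above, and the tabulated values $n_0(p)$ are precisely those for which it holds.)

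To prove the claim, first record from Display \ref{constants_f} and Definition \ref{entropy} the closed forms of $f_1(p),\dots,f_5(p)$, and verify that $c(p)>f_1(p)$ for each of the nine primes; this is necessary because $F(n,p)/n\to f_1(p)$, so no linear bound $c(p)n$ with $c(p)=f_1(p)$ could dominate the positive lower-order term $2.5\log_p n$. Next set
\[
g(n)\;=\;c(p)\,n-F(n,p)\;=\;\bigl(c(p)-f_1(p)\bigr)n\;-\;2.5\log_p n\;-\;f_2(p)\;-\;\frac{f_3(p)}{n-1}\;-\;\frac{f_4(p)}{f_5(p)(n-1)-2},
\]
and differentiate: $g'(n)=\bigl(c(p)-f_1(p)\bigr)-\dfrac{2.5}{n\ln p}+\dfrac{f_3(p)}{(n-1)^2}+\dfrac{f_4(p)f_5(p)}{\bigl(f_5(p)(n-1)-2\bigr)^2}$. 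Since $c(p)-f_1(p)$ is bounded below by a positive constant and $\ln p\ge\ln 3$, the first two terms already make $g'(n)>0$ once $n$ exceeds an explicit threshold far below $n_0(p)$ (the two rational tails are $O(n^{-2})$ and, with the signs and sizes of $f_3(p),f_4(p),f_5(p)$ from Display \ref{constants_f}, do not disturb this). Hence $g$ is strictly increasing on $[n_0(p),\infty)$, and it remains only to check the single numerical inequality $g(n_0(p))>0$ for each $p$ — a finite computation with the tabulated $c(p),n_0(p)$ and the explicit $f_i(p)$.

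The main obstacle is exactly this endpoint verification. The values $n_0(p)$ are chosen essentially as the smallest dimensions at which the stated slopes $c(p)$ work, so the margin $g(n_0(p))$ is thin; one must therefore control the entropy- and Johnson-function contributions to $f_1(p),\dots,f_4(p)$ with enough precision to \emph{certify}, rather than merely observe numerically, both $c(p)>f_1(p)$ and $g(n_0(p))>0$. The wide spread in $n_0(p)$ — e.g.\ $n_0(23)=2429$ versus $n_0(29)=208255$ — simply reflects how small $c(p)-f_1(p)$ (equivalently, how large $f_2(p)$) becomes for certain primes, and is handled uniformly by the monotonicity argument above once the base cases are confirmed.
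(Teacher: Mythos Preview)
Your proposal is correct and follows exactly the route the paper takes: the paper derives Corollary~B directly from Theorem~\ref{main} via the observation recorded immediately before the corollary that $F(n,p)\le c(p)\,n$ for all $n\ge n_0(p)$, with the tabulated constants verified numerically. Your monotonicity-plus-endpoint argument simply supplies the details the paper leaves implicit; note that the paper states the non-strict inequality $F(n,p)\le c(p)n$, so your sharpening to a strict inequality (needed because Theorem~\ref{main} requires $r>F(n,p)$ while the corollary only assumes $r\ge c(p)n$) is a useful refinement to make explicit.
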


 A few additional remarks are in order concerning $\Z_p^r$-actions. We first address the hypothesis that the $\Z_p^r$-action admits a fixed point. While Lemma 2.1 in \cite{FR} shows if one assumes that the prime $p$ is larger than the total Betti number of a positively curved, even-dimensional manifold,  then a fixed point exists. This result can be considered a $\Z_p$-torus analogue  large $p$ of Berger's theorem for torus actions \cite{Be}. However, the best estimates on the total Betti number are $10^{10n^4}$ (see, for example, Wilking \cite{Wilkingsurvey})  
 where $n$ is the dimension of the manifold, and even the conjectured upper bound of $2^n$, which holds for rationally elliptic spaces, excludes the primes $p\leq 29$ for manifolds of dimensions $\geq 16$.

Unlike in even dimensions, there appears to be no known $\Z_p$-torus  analogue in odd dimensions for the theorem of Sugahara \cite{Su}.
We note that while there are no free $\Z_p$-actions on even-dimensional manifolds of positive curvature for $p\geq 3$ by Synge's theorem, in all odd dimensions such actions exist. Moreover,  there are free isometric $(\Z_3\times \Z_3)$-actions on Eschenburg spaces by work of Grove and Shankar \cite{GSh} and on Baza\u{\i}kin spaces by work of Baza\u{\i}kin \cite{Ba}. As noted in \cite{KKSS},  without the assumption of a non-empty fixed point set,  we lose both the existence of a fixed point set and of an induced $\Z_p$-torus action on any fixed point set component of a $\Z_p$-subtorus containing the fixed point of the $\Z_p^r$.

Finally, we note a few differences between the results to date for $\Z_2$-tori and $\Z_p$-tori. First,  an irreducible representation of $\Z_2$ has real dimension $1$, while that of $\Z_p$ for $p\geq 3$ is $2$, so the $\Z_p$-torus case is more like that of the torus, $T^n$. This accounts for the difference by approximately a factor of two in the maximal and  non-maximal symmetry rank results for these finite tori. Additionally, the lower bound on the rank of $(n+1)/2$ for $n\geq 24$ in Theorem B in \cite{KKSS},  which was improved to $2n/5$ for $n\geq 31, 482$, using Corollary B in \cite{Gh}, corresponds to the half-maximal symmetry rank case. In the case of a torus action, the half-maximal symmetry rank result has a lower bound of $n/4+1$ for $n\geq 10$, which Theorem \ref{main} approximates in the case where $p=3$ and $n\geq 92$. Moreover, as mentioned above, 
our bound in Theorem \ref{main} is no better than the approximately $3n/8$ bound for primes greater than $29$, but for sufficiently large $n$, as noted above, it is better than the  $n/2$ maximal symmetry rank bound. However, as previously remarked, for sufficiently large $n$, our second bound, that is, the bound in Theorem \ref{main2}, \emph{is} better than the approximately $3n/8$ bound for all odd primes. It bears mentioning that Theorem B  in \cite{KKSS} and the half-maximal symmetry rank result in \cite{W} both relied on the binary error correcting code lower bound in \cite{W} that represents a significant improvement for finite-length codes over the finite-length Elias-Bassalygo binary bound. For $q$-ary codes, there is no direct analogue of the binary lower bound in \cite{W}, so one must look for other ways to improve on the finite-length Plotkin and  Elias-Bassalygo $q$-ary bounds we use here.

\subsection{Organization}

The paper is organized as follows. In Section \ref{2}, we derive the  finite-length Elias-Bassalygo and Plotkin $q$-ary bounds and establish various properties of both. % two estimates from the theory of error-correcting codes.
In Section \ref{3}, we gather %the known bounds and 
the necessary geometric and topological background information. In Sections \ref{4} and \ref{5}, we prove Theorems \ref{main} and \ref{main2}, respectively. %In Section 5, we prove Theorem \ref{main2}.

\subsection{Acknowledgments} The authors are grateful to Lee Kennard for helpful conversations. Both authors were partially supported by NSF grants DMS-2204324 and DMS-2506633. 

%%%%%%%%%%%%%%%%%%%%%%%%%%%%%%%%
\section{Error Correcting Codes}\label{2}
%%%%%%%%%%%%%%%%%%%%%%%%%%%%%%%%

In this section, we provide estimates derived from the theory of error correcting codes that have applications to the theory of positively curved manifolds. 

Recall that a $q$-ary linear code is a subspace $\mathcal{C}$ of $\mathbb{F}_q^n$ for some $n \geq 1$, where $\mathbb{F}_q$ is a finite field with $q$ elements. In this case, in error-correcting code theory, one says that the \textit{alphabet} $\mathcal{A}$, that is, a finite set of symbols with at least two elements, is $\mathbb{F}_q$.  Note that  the results of this section hold for all finite fields, but for our purposes, we consider only the case $q=p$, where $p$ is any prime,

We first recall the definitions of the Hamming weight and the Hamming distance.
\begin{definition}
    Let $x \in \mathbb{F}_q^n$. The Hamming weight of $x$, denoted $|x|$ is the number of non-trivial entries in $x$. The Hamming distance between $x, y \in \mathbb{F}_q^n$ is $|x-y|$.
\end{definition} 
\noindent We further recall that the Hamming distance defines a metric on $\mathbb{F}_q^n$, with the induced topology being the discrete topology.

In the theory of error-correcting codes, one typically considers the quantity
$A_q(n,d)$, the maximum number of codewords in a $q$-ary code of length $n$
with a minimum Hamming distance of at least $d$. It is customary to phrase upper bounds in terms of the
\emph{rate upper bound}
\[
R = \frac{1}{n}\log_q A_q(n,d).
\]
The classical bounds on $A_q(n,d)$---for example, the Hamming, Johnson, Plotkin, Griesmer, and Elias--Bassalygo bounds---can all be stated in the form described above. Among these, for small primes, the asymptotic Elias--Bassalygo bound is known to be stronger than most of the classical inequalities, though it remains weaker than the MRRW--2 bound in the binary case $p=2$ \cite{MRRW}, whose derivation requires substantially more sophisticated techniques than we employ here. It is also worth noting that, for large primes, the asymptotic Plotkin bound becomes tighter than the Elias--Bassalygo bound once $p \ge 23$. Because the coding-theory literature is primarily concerned with \emph{asymptotic} bounds, while our geometric applications require strong \emph{finite-length} bounds, we therefore focus on deriving finite-length versions of both the Elias--Bassalygo bound and the Plotkin bound.

In particular, 
we are interested
in upper bounds on the minimum Hamming weight of the nonzero elements in the
image of an injective, linear map
\[
\Z_p^r \hookrightarrow \Z_p^m.
\]

Finally, we note the geometric connection motivating this formulation. If $N$
is a fixed-point component of an element $\tau\in \Z_p^r$ acting on a
positively curved manifold $M$, then $\tau$ acts linearly on $T_xM$ for any
$x\in N$. The injective linear maps above can thus be viewed as arising from
such isotropy representations, and the coding-theoretic bounds translate into
codimension constraints on the fixed-point sets. In particular, for $p=2$, since the irreducible representations of an involution are $1$-dimensional, we set $m=n$, where $n$ is the dimension of our manifold. For $p$ an odd prime, the irreducible representations of $\tau\in\Z_p^r$ are $2$-dimensional, and so we set $m=\lfloor n/2\rfloor$.

\subsection{The Finite-Length Elias-Bassalygo Bound.}

To derive the finite-length Elias-Bassalygo bound, we first collect one definition and three lemmas that are needed for the proof. The definition introduces two standard functions from coding theory: the {\em $q$-ary entropy function} and the {\em Johnson radius}.

\begin{definition}[{\bf $\bf{q}$-ary entropy function and the Johnson radius}] \label{entropy} Let $\delta \in [0,\frac{q-1}{q}]$.
The \emph{$q$-ary entropy function} $H_q(x)$ and the \emph{Johnson radius} $J_q(\delta)$ are respectively defined by
\begin{align*}
H_q(x) &= x \log_q(q-1) - x \log_q x - (1 - x) \log_q(1 - x), \,\,\text{and}\\
J_q(\delta) &= \left(1 - \frac{1}{q} \right) \left(1 - \sqrt{1 - \frac{q}{q-1} \delta} \right).
\end{align*}

\end{definition}

We now fix an alphabet size $q\ge2$, block length $n$, and let $C\subseteq\mathbb{F}_q^n$ be a code of size $|C|$ and minimum distance $\delta n$.  We derive an explicit upper bound on the {\em information rate of $C$}
$$
R_C=\frac{\log_q |C|}{n}
$$
in terms of the $q$-ary entropy $H_q$ and Johnson function $J_q$, that, in turn, gives us an upper bound on $R$ since $C$ is arbitrary. 

Two remarks are in order. First, the function $J_q$, often called the
\emph{Johnson radius} in the coding theory literature, is an increasing
function that maps the interval $[0,\frac{q-1}{q}]$ onto itself.
Second, the function $1-H_q(\epsilon)$ decreases in the same interval,
a fact that is used to derive the final inequality.

We now collect the results needed to derive the Elias-Bassalygo bound. The first is Lemma 2.2 from \cite{RudraLect19}.

\begin{lemma}\label{pigeonhole}\cite{RudraLect19}
For any integer $0\le e\le n$, there exists a 
$y\in\mathbb{F}_q^n$ such that
$$
|C\cap B_q(y,e)|
\ge |C|\;\frac{\mathrm{Vol}_q(0,e)}{q^n},
$$
where
\begin{equation}\label{vol}
\mathrm{Vol}_q(0,e)=\sum_{i=0}^e\binom{n}{i}(q-1)^i.
\end{equation}
\end{lemma}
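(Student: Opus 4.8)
The plan is a standard double-counting (averaging) argument, of exactly the kind used to prove volume-based packing/covering estimates in coding theory. First I would fix the radius $e$ and, for a single codeword $c\in C$, count the number of centers $y\in\mathbb{F}_q^n$ for which $c\in B_q(y,e)$, i.e. for which $d(c,y)\le e$. Because the Hamming distance is translation-invariant, this count is the same for every $c$: it equals the number of points in a Hamming ball of radius $e$ about any fixed center, namely $\mathrm{Vol}_q(0,e)=\sum_{i=0}^{e}\binom{n}{i}(q-1)^i$ (choose the $i\le e$ coordinates in which $y$ is to differ from $c$, and in each such coordinate pick one of the $q-1$ remaining symbols).

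Next I would compute the double sum $S=\sum_{y\in\mathbb{F}_q^n}\lvert C\cap B_q(y,e)\rvert$ in two ways. Writing $\lvert C\cap B_q(y,e)\rvert=\sum_{c\in C}\mathbf{1}[\,d(c,y)\le e\,]$ and interchanging the order of summation gives $S=\sum_{c\in C}\lvert\{\,y: d(c,y)\le e\,\}\rvert$, which by the previous paragraph equals $\lvert C\rvert\cdot\mathrm{Vol}_q(0,e)$.

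Finally, the sum $S$ is taken over the $q^n$ elements of $\mathbb{F}_q^n$, so the average value of $\lvert C\cap B_q(y,e)\rvert$ over $y$ is $S/q^n=\lvert C\rvert\,\mathrm{Vol}_q(0,e)/q^n$. Some term of a finite sum is at least the average, so there exists $y\in\mathbb{F}_q^n$ with $\lvert C\cap B_q(y,e)\rvert\ge\lvert C\rvert\,\mathrm{Vol}_q(0,e)/q^n$, which is the claimed bound. There is no genuine obstacle here; the only point requiring a moment's care is the observation that the ball volume $\mathrm{Vol}_q(0,e)$ does not depend on the center, which is immediate from translation invariance of the Hamming metric on $\mathbb{F}_q^n$.
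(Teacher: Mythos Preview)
Your proof is correct and is essentially the same argument as the paper's: the paper phrases it probabilistically (choose $y$ uniformly at random, compute $\mathbb{E}\bigl[|C\cap B_q(y,e)|\bigr]$ by linearity of expectation, then invoke the pigeonhole principle), while you phrase the identical computation as a deterministic double count. The key observation in both is that each $c\in C$ lies in exactly $\mathrm{Vol}_q(0,e)$ of the balls $B_q(y,e)$, and then averaging over the $q^n$ choices of $y$ gives the result.
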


\begin{proof}
The proof, which is combinatorial,  is sketched in \cite{RudraLect19} with some parts of it given in detail in \cite{RudraLect14}, so we include the proof for the convenience of the reader.

Pick  
$y' \in \mathbb{F}_q^n$ uniformly at random and define
$$
X(y') := \bigl|C \cap B_q(y',e)\bigr|
   = \sum_{c \in C} \mathbf{1}_{\{d(c,y')\le e\}} .
$$
By linearity of expectation,
$$
\mathbb{E}[X]
= \sum_{c \in C} \Pr\,\!\bigl(d(c,y')\le e\bigr).
$$
For any fixed $c \in C$, the probability that $y'$ lies within distance $e$ of $c$, that is, $\mathrm{Pr}\, (d(c,y')\le e)$,
is exactly
$$
\frac{\mathrm{Vol}_q(0,e)}{q^n},
$$
since the Hamming ball of radius $e$ around $c$ has $\mathrm{Vol}_q(0,e)$ points
out of the total $q^n$ points. Hence
$$
\mathbb{E}[X] = \frac{|C| \, \mathrm{Vol}_q(0,e)}{q^n}.
$$
Therefore, by the pigeonhole principle, there must exist some $y \in \mathbb{F}_q^n$ with
$$
|C \cap B_q(y,e)| \;\ge\; \mathbb{E}[X]
= \frac{|C| \, \mathrm{Vol}_q(0,e)}{q^n}.
$$
\end{proof}

The {\em Johnson bound} is another classical bound from the theory of error-correcting codes. We state the version given in Theorem 3.2 in \cite{Guruswami}, as it best suits our purposes. 
\begin{JB}
\cite{Guruswami} \label{johnson}
If the code $C\subseteq\mathbb{F}_q^n$ has minimum distance $d = \delta n$ and one sets
\[
\frac{e}{n}< J_q(\delta).
\]
then for every 
$y\in \mathbb{F}_q^n$,
\[
|C\cap B_q(y,e)| \le q\,n\,\delta n = q\,n\,d.
\]
\end{JB}
\noindent We also need the following classical inequality from analysis, which is proved in \cite{Robbins}.

\begin{SE}\cite{Robbins}\label{Stirling}
For any integer $k\ge1$, one has
\[
\sqrt{2\pi k}\,\bigl(k/e\bigr)^k\,e^{\frac1{12k+1}}
< k!
< \sqrt{2\pi k}\,\bigl(k/e\bigr)^k\,e^{\frac1{12k}}.
\]
Equivalently,
\[
\ln k! = k\ln k - k + \frac12\ln(2\pi k) + \theta_k,
\quad \textrm{where}\,
-\frac1{12k}<\theta_k<\frac1{12k+1}.
\]
\end{SE}

We are now ready to derive the finite-length Elias-Bassalygo bound in the following theorem.

\begin{theorem}[{\bf Finite-Length Elias--Bassalygo Bound}]\label{EB(p)}
Let $C \subseteq \mathbb{F}_q^n$ be a code with size $|C|$ and minimum relative distance $\delta=\frac{d}{n}$, where $d$ is the minimum distance. Set
\[
e :=   \ceil{nJ_q(\delta)}  - 1.
\]
Then we have 
\[
R_C \le 1 - H_q\left(\frac{e}{n}\right)
+ \frac{1}{2n}\log_{q}\left(2\pi \frac{e}{n}(1-\frac{e}{n})n \right)
+\frac{1}{n}\log_{q}\left(qn^2 \delta \right)
\]
\[
+\frac{1}{n \ln q}\left(\frac{1}{12n}+\frac{1}{12e+1}+\frac{1}{12(n-e)+1}\right).
\]
In particular, 
taking the supremum over all such codes $C$, it follows that the rate 
$R= 
\sup(\{R_C : C \subseteq \mathbb{F}_q^n \text{ is a code with minimum relative distance } \delta\})$ satisfies the same inequality.
\end{theorem}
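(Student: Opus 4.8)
The plan is to run the standard Elias--Bassalygo argument, combining the pigeonhole Lemma~\ref{pigeonhole}, the Johnson Bound, and the Stirling Inequalities, while keeping every lower-order term explicit instead of absorbing it into asymptotic notation.

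First I would record that with $e = \ceil{nJ_q(\delta)} - 1$ one automatically has $e/n < J_q(\delta)$, and, since $J_q(\delta)\le\tfrac{q-1}{q}<1$, also $e\le n-1$; in the range relevant to the geometric application one moreover has $e\ge 1$, so $1\le e\le n-1$ and the Stirling formula may be applied at $e$ and at $n-e$. Applying Lemma~\ref{pigeonhole} with this $e$ produces a point $y\in\mathbb{F}_q^n$ with $|C\cap B_q(y,e)|\ge |C|\,\mathrm{Vol}_q(0,e)/q^n$. Because $e/n<J_q(\delta)$, the Johnson Bound applies to the \emph{same} $y$ and gives $|C\cap B_q(y,e)|\le q\,n\,d = q\,n^2\,\delta$. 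Chaining the two inequalities and solving for $|C|$ yields $|C|\le q^{\,n+1}n^2\delta/\mathrm{Vol}_q(0,e)$, hence
\[
R_C \;=\; \frac{\log_q|C|}{n} \;\le\; 1 + \frac{1}{n}\log_q\!\bigl(qn^2\delta\bigr) - \frac{1}{n}\log_q \mathrm{Vol}_q(0,e).
\]
Everything then reduces to a clean \emph{lower} bound on $\log_q\mathrm{Vol}_q(0,e)$.

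For that I would discard all but the top term of the sum in~\eqref{vol}, using $\mathrm{Vol}_q(0,e)\ge\binom{n}{e}(q-1)^e$, and estimate $\binom{n}{e}$ from below with the Stirling Inequalities. Writing $\ln\binom{n}{e}=\ln n!-\ln e!-\ln(n-e)!$ and substituting $\ln k!=k\ln k-k+\tfrac12\ln(2\pi k)+\theta_k$: the linear terms $-n+e+(n-e)$ cancel; the three $\tfrac12\ln$ terms collapse to $-\tfrac12\ln\!\bigl(2\pi\,\tfrac{e}{n}(1-\tfrac{e}{n})\,n\bigr)$; and, after adding $e\ln(q-1)$, the remaining main terms $n\ln n-e\ln e-(n-e)\ln(n-e)+e\ln(q-1)$ are, by the definition of $H_q$ in Definition~\ref{entropy}, exactly $n\ln q\cdot H_q(e/n)$. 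For the error term I would use the two-sided estimate $-\tfrac1{12k}<\theta_k<\tfrac1{12k+1}$ to get $\theta_n-\theta_e-\theta_{n-e} > -\bigl(\tfrac1{12n}+\tfrac1{12e+1}+\tfrac1{12(n-e)+1}\bigr)$. Dividing through by $\ln q$ converts natural logs into base-$q$ logs and produces the desired lower bound on $\log_q\mathrm{Vol}_q(0,e)$.

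Substituting that lower bound into the displayed inequality for $R_C$ and rearranging yields exactly the stated upper bound (in fact with a strict inequality coming from the $\theta$-estimate, which a fortiori gives the claimed $\le$). Finally, since the resulting right-hand side depends only on $q$, $n$, and $\delta$ — the value of $e$ being determined by these — and not on the particular code, the inequality passes to the supremum $R$ over all length-$n$ codes of minimum relative distance $\delta$. There is no serious obstacle here: the only places demanding genuine care are the sign bookkeeping of the Stirling error terms and the natural-log-to-$\log_q$ conversions, together with the mild hypothesis $1\le e\le n-1$ required to invoke the Stirling Inequalities.
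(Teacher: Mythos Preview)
Your proposal is correct and follows essentially the same route as the paper: combine Lemma~\ref{pigeonhole} with the Johnson Bound to get $|C|\le qnd\cdot q^n/\mathrm{Vol}_q(0,e)$, then lower-bound $\mathrm{Vol}_q(0,e)$ by $\binom{n}{e}(q-1)^e$ and estimate the binomial coefficient via Stirling's Inequalities. If anything you are slightly more careful than the paper, since you explicitly verify $e/n<J_q(\delta)$ and $1\le e\le n-1$ before invoking those results, and your Stirling error bookkeeping yields exactly the $\tfrac{1}{12e+1}$ and $\tfrac{1}{12(n-e)+1}$ terms appearing in the statement.
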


\begin{proof}

We closely follow \cite{RudraLect19}, but give more refined estimates using  Lemma \ref{pigeonhole} and the \hyperref[johnson]{Johnson Bound}. By Lemma \ref{pigeonhole}, there exists a Hamming ball $B$ of radius $e$ such that
$$
   |B| \;\;\ge\;\; \frac{|C| \cdot \mathrm{Vol}_q(0,e)}{q^n}.
$$
The \hyperref[johnson]{Johnson Bound} then gives us that $|B| \leq qnd$. Combining these last two inequalities, we obtain
$$
   |C| \leq qnd \cdot \frac{q^n}{\mathrm{Vol}_q(0,e)}.
$$

Applying $\log_q$ to both sides of this inequality, we see that 
\[
\log_q |C| 
\le n + \log_q(qnd) - \log_q \mathrm{Vol}_q(0,e),\]
and hence, after dividing both sides by $n$, we obtain
\begin{equation}\label{RC} R_C \le 1 + \frac{\log_q(qnd)}{n} - \frac{1}{n} \log_q \mathrm{Vol}_q(0,e).
\end{equation}
It follows from Equation \eqref{vol}  that $\mathrm{Vol}_q(0,e) \ge \binom{n}{e} (q-1)^{e}$. Using \hyperref[Stirling]{Stirling's Inequalities}, we obtain
\[
\log_{q} \binom{n}{e} = n\log_{q}n 
- e \log_{q}e- (n-e) \log_{q}(n-e)
+\frac{1}{2} \log_{q}\left(\frac{n}{2\pi e(n-e)}\right)
- \Delta,
\]
where $|\Delta| \le \frac{1}{\ln q} \left(\frac{1}{12n} + \frac{1}{12e} + \frac{1}{12(n-e)}\right)$. Adding $e \log_{q} (q-1)$ to the above expression and then multiplying the resultant expression by \(-\frac{1}{n}\), it can be checked that
\[
\frac{-\log_q\mathrm{Vol}_q(0,e) }{n}\le -H_{q}(\frac{e}{n})-\frac{1}{2n}\log_{q} \left(\frac{n}{2 \pi e(n-e)} \right)+ \frac{\Delta}{n}.
\]
Substituting this into Inequality \eqref{RC}, and using $d = \delta n$, we obtain the desired result.
\end{proof}

Since floor functions appear frequently in the statement of Theorem~\ref{EB(p)}, they can make direct applications of the bound  somewhat cumbersome.  
To address this issue, we present a continuous relaxation of the finite-length Elias-Bassalygo bound, stated in the following corollary.

\begin{corollary}[Continuous Form of Theorem \ref{EB(p)}]
Fix \( \delta \in \left(0, 1 - \frac{1}{q} \right) \).  
Suppose \( C \subseteq \mathbb{F}_q^n \) is a code of size \( |C| \) and minimum relative distance $\delta=\frac{d}{n}$, where $d$ is the minimum distance.  
Then, for $n >\frac{1}{J_p(\delta)}$, we have $R_C = \frac{\log_q|C|}{n}$ satisfies:
\begin{align}
\begin{split}
R_C &\leq 1 - H_q\left(J_q(\delta)\right)
+ \frac{1}{n} \log_q\left( \frac{(q - 1)(1 - J_q(\delta))}{J_q(\delta)} \right)
+ \frac{1}{2n} \log_q\left( 2\pi n J_q(\delta) \right)
\\
& \,\,\,\,\,\,+ \frac{1}{n} \log_q(qn^2 \delta)
+\frac{1}{12n^2 \ln q}
+\frac{2}{13n \ln q} + \frac{1}{2n^2 \ln q} \left( \frac{1}{J_q(\delta) - \frac{1}{n}} + \frac{1}{1 - J_q(\delta)} \right).
\end{split}
\end{align}
In particular, taking the supremum over all such $C$, it follows that the rate $R$ satisfies the same inequality.
\end{corollary}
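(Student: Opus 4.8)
The plan is to start from the inequality in Theorem~\ref{EB(p)} and replace, term by term, the integer $e=\ceil{nJ_q(\delta)}-1$ by the continuous quantity $nJ_q(\delta)$, absorbing each discrepancy into the lower-order terms of the corollary. First I would record the elementary consequences of the hypotheses that make this legitimate. Since $\ceil{x}-1<x\le\ceil{x}$ and $\ceil{x}-1\ge x-1$ for every real $x$, we have $J_q(\delta)-\tfrac1n\le \tfrac en<J_q(\delta)$; the assumption $n>1/J_q(\delta)$ gives $nJ_q(\delta)>1$, hence $e\ge1$ and also $J_q(\delta)-\tfrac1n>0$; and $0<\delta<1-\tfrac1q$ forces $0<J_q(\delta)<\tfrac{q-1}{q}<1$ (because $J_q$ is increasing with $J_q(0)=0$ and $J_q(\tfrac{q-1}{q})=\tfrac{q-1}{q}$), whence $e<n$, i.e.\ $1\le e\le n-1$. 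On the interval $(0,\tfrac{q-1}{q})$ one computes $H_q'(x)=\log_q\!\frac{(q-1)(1-x)}{x}>0$ and $H_q''(x)=-\tfrac1{\ln q}\bigl(\tfrac1x+\tfrac1{1-x}\bigr)<0$, so $H_q$ is increasing and $H_q''$ is negative there; these are the monotonicity facts noted after Definition~\ref{entropy}.

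Next I would estimate the five summands of the Theorem~\ref{EB(p)} bound one at a time. The entropy term is the only delicate one. Write $1-H_q(\tfrac en)=\bigl(1-H_q(J_q(\delta))\bigr)+\bigl(H_q(J_q(\delta))-H_q(\tfrac en)\bigr)$. Since $H_q$ is increasing and $\tfrac en\ge J_q(\delta)-\tfrac1n$, the bracket is at most $H_q(J_q(\delta))-H_q(J_q(\delta)-\tfrac1n)$, and Taylor's theorem with second-order Lagrange remainder at $J_q(\delta)$ gives $H_q(J_q(\delta))-H_q(J_q(\delta)-\tfrac1n)=\tfrac1n H_q'(J_q(\delta))-\tfrac1{2n^2}H_q''(\xi)$ for some $\xi\in(J_q(\delta)-\tfrac1n,J_q(\delta))$. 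Substituting $H_q'(J_q(\delta))=\log_q\!\frac{(q-1)(1-J_q(\delta))}{J_q(\delta)}$ and bounding $-H_q''(\xi)=\tfrac1{\ln q}\bigl(\tfrac1\xi+\tfrac1{1-\xi}\bigr)<\tfrac1{\ln q}\bigl(\tfrac1{J_q(\delta)-1/n}+\tfrac1{1-J_q(\delta)}\bigr)$ produces exactly the second and last error terms of the corollary. For the remaining four: since $0<\tfrac en<J_q(\delta)<1$ we have $\tfrac en(1-\tfrac en)<\tfrac en<J_q(\delta)$, so monotonicity of $\log_q$ turns $\tfrac1{2n}\log_q\!\bigl(2\pi\tfrac en(1-\tfrac en)n\bigr)$ into $\tfrac1{2n}\log_q(2\pi nJ_q(\delta))$; the term $\tfrac1n\log_q(qn^2\delta)$ involves no $e$; $\tfrac1{n\ln q}\cdot\tfrac1{12n}=\tfrac1{12n^2\ln q}$; and, using $e\ge1$ and $n-e\ge1$, both $\tfrac1{12e+1}$ and $\tfrac1{12(n-e)+1}$ are at most $\tfrac1{13}$, so $\tfrac1{n\ln q}\bigl(\tfrac1{12e+1}+\tfrac1{12(n-e)+1}\bigr)\le\tfrac2{13n\ln q}$. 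Summing the five resulting bounds yields the stated inequality for $R_C$; since the right-hand side depends only on $q,n,\delta$, taking the supremum over all codes $C\subseteq\mathbb{F}_q^n$ of relative distance $\delta$ gives the same bound for $R$, exactly as in Theorem~\ref{EB(p)}.

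The hard part will be getting the constant $\tfrac12$ in the final error term right. The naive estimate $H_q(J_q(\delta))-H_q(\tfrac en)\le(J_q(\delta)-\tfrac en)H_q'(\tfrac en)\le\tfrac1n H_q'(\tfrac en)$ both loses this factor and, worse, leaves $H_q'$ evaluated at the point $\tfrac en<J_q(\delta)$, where it \emph{exceeds} $H_q'(J_q(\delta))$ (as $H_q'$ is decreasing), so it does not reduce to the term in the corollary; using the exact second-order remainder at $J_q(\delta)$ rather than a first-order bound at $\tfrac en$ is what makes the argument close. One must also keep the direction of monotonicity straight---because $\tfrac en<J_q(\delta)$, the quantity $1-H_q(\tfrac en)$ is \emph{larger} than $1-H_q(J_q(\delta))$, so the positive correction is genuinely needed---and verify the positivity conditions $e\ge1$, $n-e\ge1$, and $J_q(\delta)-\tfrac1n>0$ used above, all of which follow from $n>1/J_q(\delta)$ and $0<\delta<1-\tfrac1q$.
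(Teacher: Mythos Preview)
Your proposal is correct and follows essentially the same route as the paper: bound $1-H_q(e/n)$ by $1-H_q(J_q(\delta)-\tfrac1n)$ using monotonicity of $H_q$ and $e/n\ge J_q(\delta)-\tfrac1n$, then apply a second-order Taylor expansion of $H_q$ at $J_q(\delta)$ together with the derivative formulas and the bound on $|H_q''(\xi)|$. Your treatment of the remaining terms (bounding $\tfrac{e}{n}(1-\tfrac{e}{n})$ by $J_q(\delta)$ and each Stirling correction by $\tfrac1{13}$ via $e\ge1$, $n-e\ge1$) matches the paper's implicit estimates, and you supply the positivity checks the paper omits.
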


\begin{proof}
Using Theorem~\ref{EB(p)} and the fact that $1-H_q(\epsilon)$ is decreasing, we obtain
\begin{align*}
R &\le 1 - H_q\!\left(\frac{e}{n}\right)
  + \frac{1}{2n}\log_{q}\!\left(2\pi \frac{e}{n}\!\left(1-\frac{e}{n}\right)n \right)
  + \frac{1}{n}\log_{q}\!\left(qn^2 \delta \right)
  + \frac{1}{n \ln q}\!\left(\frac{1}{12n}+\frac{1}{12e+1}+\frac{1}{12(n-e)+1}\right)\\
&\le 1 - H_q\!\left(J_q(\delta) - \frac{1}{n}\right)
  + \frac{1}{2n} \log_q\!\bigl( 2\pi n J_q(\delta) \bigr)
  + \frac{1}{n} \log_q(qn^2 \delta)
  + \frac{1}{12n^2 \ln q} + \frac{2}{13n \ln q},
\end{align*}
where $e := \lceil n J_q(\delta) \rceil - 1$.

Since 
$$
\frac{e}{n} \in \Bigl(J_q(\delta) - \frac{1}{n}, \, J_q(\delta)\Bigr],
$$ 
a Taylor expansion of $H_q$ at $J_q(\delta)$ results in
$$
H_q\!\left( J_q(\delta) - \frac{1}{n} \right) 
= H_q\!\bigl(J_q(\delta)\bigr) 
  - \frac{1}{n} H_q'\!\bigl(J_q(\delta)\bigr) 
  + \frac{1}{2n^2} H_q''(\xi),
$$
for some $\xi \in \bigl(J_q(\delta) - \frac{1}{n}, \, J_q(\delta)\bigr)$.

Thus,
$$
1 - H_q\!\left( J_q(\delta) + \frac{1}{n} \right)
\;\le\; 1 - H_q\!\bigl(J_q(\delta)\bigr)
  + \frac{1}{n} H_q'\!\bigl(J_q(\delta)\bigr)
  + \frac{1}{2n^2}\,\bigl| H_q''(\xi) \bigr|.
$$

We compute:
$$
H_q'\!\bigl(J_q(\delta)\bigr) 
  = \log_q\!\left( \frac{(q - 1)(1 - J_q(\delta))}{J_q(\delta)} \right), 
\qquad
H_q''(y) 
  = -\frac{1}{\ln q}\!\left( \frac{1}{y} + \frac{1}{1-y} \right),
$$
so
$$
\bigl|H_q''(\xi)\bigr| 
  \;\le\; \frac{1}{\ln q}\!\left( \frac{1}{J_q(\delta) - \frac{1}{n}} + \frac{1}{1 - J_q(\delta)} \right).
$$
Substituting this into the inequality for $R$ above, we obtain the result.

\end{proof}

Letting $\rho: \Z_p^r\rightarrow \Z_p^n$ 
be an injection that is not necessarily linear, if we set  $C=\rho(\Z_p^r)$, then  $R_C=\frac{r}{n}$. In particular, Corollary \ref{EB(p)} gives us the following version of Theorem 1.2 in \cite{KKSS} for $q$-ary codes with $q=p$. 

\begin{corollary} \label{rEB(p)} 
Fix $\delta\in(0, 1-1/p)$, $p$ a prime. If $\rho: \Z_p^r\rightarrow \Z_p^n$ 
is an injection (not necessarily linear) such that $|\rho(\tau_p)|\geq \delta n$ 
for all non-trivial $\tau_p\in \Z_p^r$. For $n >1/J_p(\delta),$ 
\begin{align*}
r &\leq (1 - H_p(J_p(\delta)))n 
+ 2.5\log_p n 
+ \log_p\!\left( \frac{(p - 1)(1 - J_p(\delta))}{J_p(\delta)} \right)+ 0.5 \log_p\!\left( 2\pi J_p(\delta) \right)  \\
&
\,\,\,\,\,\,\,+ \log_p(p \delta) 
+\frac{2}{13 \ln p}+ \frac{1}{n}\!\left(\frac{1}{12\ln p} +\frac{1}{(2 \ln p)(1 - J_p(\delta))}\right) + \frac{1}{(2\ln p)J_p(\delta)n - 2 \ln p}=r(\delta).
\end{align*}
\end{corollary}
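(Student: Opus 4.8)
The plan is to derive Corollary \ref{rEB(p)} as a direct specialization of the Continuous Form of Theorem \ref{EB(p)} (the preceding corollary), taking $q = p$ prime, and then translating the statement about codes into a statement about the injection $\rho$. First I would observe that if $\rho\colon \Z_p^r \hookrightarrow \Z_p^n$ is an injection (not necessarily linear) with $|\rho(\tau_p)| \ge \delta n$ for every nontrivial $\tau_p$, then the image $C = \rho(\Z_p^r)$ is a subset of $\mathbb{F}_p^n$ of size $|C| = p^r$. The hypothesis $|\rho(\tau_p)| \ge \delta n$ for all nontrivial $\tau_p$ does \emph{not} immediately say that $C$ has minimum distance $\ge \delta n$ — that would require $|\rho(\sigma) - \rho(\tau)| \ge \delta n$ for all distinct pairs — so the first genuine point to address is that the volume/pigeonhole argument of Lemma \ref{pigeonhole} together with the Johnson Bound only uses distances to a \emph{single} center $y$, and in fact one can reduce to the case where the relevant distances are weights: since $\rho$ need not be linear we cannot translate $C$ to contain $0$, but we can still run the covering argument and then observe that the Johnson bound, as stated, bounds $|C \cap B_p(y,e)|$ for \emph{every} $y$ using only the minimum distance of $C$. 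I would therefore first record the (standard) reduction showing that for the purpose of this estimate one may assume $C$ has minimum distance at least $\delta n$; alternatively, and more cleanly, I would note that the Main Theorem's application supplies a linear $\rho$ arising from an isotropy representation, in which case $C$ is a linear code and minimum weight equals minimum distance, so $C$ genuinely has minimum distance $\delta n$ and the hypothesis of the Continuous Form Corollary is met verbatim.

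Granting that $C$ is a code of length $n$, size $p^r$, and minimum relative distance $\delta$, the second step is purely bookkeeping: set $R_C = \tfrac{1}{n}\log_p |C| = \tfrac{r}{n}$, apply the Continuous Form of Theorem \ref{EB(p)} with $q = p$, and multiply through by $n$. This converts
$$
R_C \le 1 - H_p(J_p(\delta)) + \frac{1}{n}\log_p\!\left(\frac{(p-1)(1-J_p(\delta))}{J_p(\delta)}\right) + \frac{1}{2n}\log_p(2\pi n J_p(\delta)) + \frac{1}{n}\log_p(pn^2\delta) + \cdots
$$
into the stated bound on $r$. The only nontrivial simplifications are: combining the two logarithmic-in-$n$ terms $\tfrac{1}{2n}\log_p(2\pi n J_p(\delta))$ and $\tfrac{1}{n}\log_p(pn^2\delta)$, after multiplying by $n$, into the single coefficient $2.5\log_p n$ plus a constant-order remainder — here $\tfrac12\log_p n$ comes from $2\pi n J_p(\delta)$ and $2\log_p n$ comes from $pn^2\delta$, giving exactly $2.5\log_p n$, with the constants $0.5\log_p(2\pi J_p(\delta))$ and $\log_p(p\delta)$ split off; and bounding the three error terms $\tfrac{1}{12n^2\ln p}$, $\tfrac{2}{13n\ln p}$, $\tfrac{1}{2n^2\ln p}\big(\tfrac{1}{J_p(\delta)-1/n} + \tfrac{1}{1-J_p(\delta)}\big)$ after multiplication by $n$: the first becomes $\tfrac{1}{12n\ln p}$, the second becomes the $n$-free term $\tfrac{2}{13\ln p}$, and the third splits into $\tfrac{1}{(2\ln p)J_p(\delta)n - 2\ln p}$ (from the $\tfrac{1}{J_p(\delta)-1/n}$ piece, clearing the denominator) and $\tfrac{1}{(2n\ln p)(1-J_p(\delta))}$. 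Collecting these gives precisely $r(\delta)$ as displayed.

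The main obstacle — really the only place requiring care rather than algebra — is the legitimacy of passing from the hypothesis ``$|\rho(\tau_p)| \ge \delta n$ for all nontrivial $\tau_p$'' to an application of a bound phrased in terms of the minimum distance of a code $C$, given that $\rho$ is allowed to be nonlinear. I would handle this by the following remark: the covering step (Lemma \ref{pigeonhole}) produces a ball $B_p(y,e)$ with $|C\cap B_p(y,e)| \ge |C|\,\mathrm{Vol}_p(0,e)/p^n$, and the Johnson Bound bounds $|C \cap B_p(y,e)|$ for \emph{any} $y$ provided $e/n < J_p(\delta)$ where $\delta n$ is a lower bound for the minimum distance of $C$; but when $\rho$ is not linear, $\delta n$ bounds only the weights $|\rho(\tau_p)|$, i.e. the distances from the distinguished point $\rho(0) = 0$ (if $\rho(0)=0$) to other codewords, not arbitrary pairwise distances. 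In the intended geometric application $\rho$ is in fact linear (it is an isotropy representation restricted to a $\Z_p$-subtorus), so $C$ is a subspace, its minimum weight equals its minimum distance, and no gap arises; I would state Corollary \ref{rEB(p)} with the understanding that the nonlinear case is reduced to the linear one exactly as in Theorem 1.2 of \cite{KKSS}, or alternatively restrict the hypothesis to pairwise distances. Once this is settled, the remainder is the routine multiplication-and-collection described above, and the ``in particular'' clause for the rate $R = \sup_C R_C$ follows because every term on the right-hand side except $R_C$ itself is independent of $C$.
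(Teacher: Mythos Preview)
Your proposal is correct and follows essentially the same approach as the paper, which derives Corollary~\ref{rEB(p)} in a single sentence by setting $C=\rho(\Z_p^r)$ so that $R_C=r/n$ and invoking the Continuous Form of Theorem~\ref{EB(p)}; your write-up simply supplies the algebraic bookkeeping (the $2.5\log_p n$ collection and the splitting of the error terms) that the paper leaves implicit. You are in fact more careful than the paper on the one substantive point: the paper silently treats the hypothesis $|\rho(\tau_p)|\ge\delta n$ as equivalent to minimum distance $\ge\delta n$, which is only automatic when $\rho$ is linear, and your observation that the geometric application always supplies a linear isotropy representation is exactly the intended justification.
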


\begin{remark}
    Note that for large $n$, the dominant term in the  bound in Corollary \ref{rEB(p)} is $(1-H_p(J_p(\delta)))n$. In the proof of Theorem \ref{main}, we set $\delta=\frac{1}{4}$ and replace n in the linear (dominant) term by $\frac{n}{2}$.   We can then view $f_1(p) = \frac{1}{2}(1-H_p(J_p(1/4)))$, 
    as a function in $p$. In the following proposition, we show that $f_1(p) >\frac{3}{8}$ for $p \geq 31$, and thus Corollary \ref{rEB(p)}  yields  no improvement of the best known lower bound for the rank of the $\Z_p$-torus given in Theorem \ref{Gh+Zp} for $p\geq 31$. 
    \end{remark}

\begin{proposition}\label{p29}
    The function $f_1(p) = \frac{1}{2}(1-H_p(J_p(1/4))$ is strictly increasing for $p \ge 3$, and, in particular, $f_1(p) \geq f_1(31) >3/8$ for $p \geq 31$.
\end{proposition}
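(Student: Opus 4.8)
The plan is to treat $f_1$ as a real‑valued function of $p\in[3,\infty)$ (strict monotonicity there is stronger than what is claimed and implies the prime case), and to handle the two assertions separately: first show $f_1'(p)>0$ on $[3,\infty)$, then verify the single numerical inequality $f_1(31)>3/8$, after which $f_1(p)\ge f_1(31)>3/8$ for $p\ge 31$ is immediate from monotonicity.

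For the monotonicity, abbreviate $\delta(p):=J_p(1/4)$ and, extending the notation of the corollary proofs above, $\Phi(p,\delta):=H_p(\delta)=\frac{1}{\ln p}\bigl(\delta\ln(p-1)-\delta\ln\delta-(1-\delta)\ln(1-\delta)\bigr)$, so that $f_1(p)=\tfrac12-\tfrac12\Phi(p,\delta(p))$ and, by the chain rule,
\[
f_1'(p)=-\tfrac12\Bigl(\partial_p\Phi\bigl(p,\delta(p)\bigr)+\partial_\delta\Phi\bigl(p,\delta(p)\bigr)\,\delta'(p)\Bigr).
\]
I would establish three sign facts valid for every $p\ge3$: (i) $\delta'(p)<0$; (ii) $\partial_\delta\Phi(p,\delta(p))=H_p'(\delta(p))=\log_p\!\bigl(\tfrac{(p-1)(1-\delta(p))}{\delta(p)}\bigr)>0$; and (iii) $\partial_p\Phi(p,\delta(p))<0$. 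Granting these, $\partial_\delta\Phi\cdot\delta'<0$, so the parenthesized quantity is a sum of two negative terms and $f_1'(p)>0$.

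Fact (i) is the cleanest: rationalizing gives $\delta(p)=\frac{2(p-1)-\sqrt{3p^2-7p+4}}{2p}$, and a direct differentiation shows that the sign of $\delta'(p)$ equals the sign of $4\sqrt{3p^2-7p+4}-(7p-8)$; since $7p-8>0$ for $p\ge3$, squaring collapses this to $48p^2<49p^2$, so $\delta'(p)<0$. Consequently $\delta$ decreases from $\delta(3)=\tfrac{4-\sqrt{10}}{6}<0.14$ toward $\lim_{p\to\infty}\delta(p)=1-\tfrac{\sqrt3}{2}>0.13$, which gives the a priori bound $\delta(p)\in(0.13,0.14)$ for all $p\ge3$ that the remaining steps use. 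Fact (ii) then follows since $(p-1)(1-\delta(p))\ge 2(1-0.14)>0.14>\delta(p)$. For fact (iii), differentiating in $p$ yields $\partial_p\Phi(p,\delta)=\frac{1}{\ln p}\bigl(\frac{\delta}{p-1}-\frac{\Phi(p,\delta)}{p}\bigr)$, so (iii) is equivalent to $\frac{H_p(\delta(p))}{\delta(p)}>\frac{p}{p-1}$. Writing $h(\delta):=-\delta\ln\delta-(1-\delta)\ln(1-\delta)$, one has $\frac{H_p(\delta)}{\delta}=1+\frac{1}{\ln p}\bigl(\ln(1-\tfrac1p)+\frac{h(\delta)}{\delta}\bigr)$, and using $\ln(1-\tfrac1p)\ge-\tfrac1{p-1}$ reduces the inequality to $\frac{h(\delta(p))}{\delta(p)}>\frac{1+\ln p}{p-1}$; the left side exceeds $2$ on $\delta\in(0.13,0.14)$ (there $h(\delta)/\delta$ is decreasing, with value $>2.8$ at $\delta=0.14$), while the right side is decreasing in $p$ with maximum $\tfrac{1+\ln3}{2}<1.05$, so (iii) holds.

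The step I expect to be the main obstacle is fact (iii): both sides of $\frac{H_p(\delta(p))}{\delta(p)}>\frac{p}{p-1}$ tend to $1$ as $p\to\infty$ and the available gap is only of order $1/\ln p$, so the estimate must be made genuinely uniform in $p$ — the argument above does this, but only because fact (i) supplies the two‑sided control on $\delta(p)$. Finally, for $f_1(31)>3/8$ I would evaluate $\delta(31)=\frac{60-\sqrt{2670}}{62}$ and then compute $H_{31}(\delta(31))$; this gives $f_1(31)=\tfrac12\bigl(1-H_{31}(\delta(31))\bigr)\approx 0.37604>0.375$. Since the margin is small, the numerical evaluation should be carried out to a handful of decimal places to make it rigorous, after which $f_1(p)\ge f_1(31)>3/8$ for $p\ge 31$ follows from the monotonicity just proved.
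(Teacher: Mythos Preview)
Your proposal is correct and follows essentially the same architecture as the paper's proof: both treat $p$ as a real variable, apply the chain rule to split $f_1'(p)$ into the same two summands, and verify the same three sign facts $(\delta'(p)<0$, $\partial_\delta H_p>0$, $\partial_p H_p<0)$, with nearly identical arguments for the first two. The only substantive difference is in handling $\partial_p H_p<0$: the paper bounds the numerator using $-x\ln x\ge x(1-x)$ and $-(1-x)\ln(1-x)\ge x(1-x)$ for $x\le 1/2$ to reduce to showing $g(p)=p\ln p-(p-1)\ln(p-1)-(p-1)<0$, whereas you recast the inequality as $H_p(\delta)/\delta>p/(p-1)$ and exploit the tighter two-sided bound $\delta(p)\in(0.13,0.14)$; both routes are elementary and of comparable length, with the paper's needing only the cruder $\delta(p)<1/2$ but yours making the asymptotic tightness as $p\to\infty$ more transparent.
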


\begin{proof}
It suffices to show that $f(p)=2f_1(p)=1-H_p(J_p(1/4))$ is strictly increasing for $p \geq 3.$

Let $x(p):=J_p(\frac14)=(1-\frac1p)\bigl(1-s(p)\bigr)$ where 
$$
s(p)=\sqrt{\frac{3p-4}{4(p-1)}}\in(0,1)\quad (p \ge2).
$$
We first show that $x(p)$ is strictly decreasing and $x(p)\in(0, 1/2)$ for $p\geq 3$. Differentiating, we obtain
$$
x'(p)=\frac{1-s(p)}{p^2}-\Bigl(1-\frac1p\Bigr)\frac{1}{8(p-1)^2\,s(p)}.
$$
All denominators are $>0$, so $x'(p)<0$ is equivalent to 
$8(p-1)s(p)(1-s(p))<p$. Substituting in the expression for $s(p)$ and rearranging gives us that 
we need $4\sqrt{(p-1)(3p-4)}<7p-8$, which, for $p \ge 3$, is equivalent, after squaring both sides and rearranging again, to $0<p^2$. 
Hence $x'(p)<0$ for all $p \ge 3$. Since $x(3)\leq 0.14$, the claim holds.

Now $f(p)=1-H_p(x(p))$, so by the chain rule
$$
f'(p)=-\frac{d}{dp}\big(H_p(x(p))\big)
= -\Bigl(\partial_p H_p\Bigr)(x(p)) \;-\; \Bigl(\partial_x H_p\Bigr)(x(p))\,x'(p).
$$
We claim that the first term is strictly positive  and the second is non-negative, which in turn gives us that $f'(p)>0$. We first consider $\partial_x H_p(x)$. A direct calculation gives
\begin{align*}
\partial_x H_p(x)&=\frac{1}{\ln p}\,\ln\,\!\left(\frac{(p-1)(1-x)}{x}\right).\\
\end{align*}

\noindent For $p \ge 3$ we have $0<x(p)<(p-1)/p$, hence the argument in the righthand side of the above display is strictly greater than $1$ and  
therefore $\partial_x H_p(x(p))>0$. 
Since  $x'(p)<0$, we conclude that  $-\bigl(\partial_x H_p\bigr)(x(p))\,x'(p)>0$, as claimed.
\vspace{.1 cm}

We now consider $\partial_p H_p(x)$.
Another direct computation gives
\begin{align}\label{eq:dHpdp-exact}
\begin{split}
\partial_p H_p(x)&
=\frac{x\,p\ln p-(p-1)\left(x\ln(p-1)-x\ln x-(1-x)\ln(1-x)\right)}{p(p-1)(\ln p)^2}\\
&\leq \frac{x\,p\ln p-(p-1)\left(x\ln(p-1)+2x(1-x)\right)}{p(p-1)(\ln p)^2},
\end{split}
\end{align}
since, for \(x\in(0,1/2)\) we have
$$
-\,x\ln x\ge x(1-x)\,\,\,\, {\textrm{and} }\,\, -\,(1-x)\ln(1-x)\ge x(1-x)$$.

In particular, for $x\le \frac12$, for the numerator of the second expression in Inequality \eqref{eq:dHpdp-exact} we see that
$$
x\,p\ln p-(p-1)(x\ln(p-1)-x\ln x-(1-x)\ln(1-x))
\;\le\;x\,g(p),
$$
where
$$
g(p):=p\ln p-(p-1)\ln(p-1)-(p-1).$$ Then
$$
g'(p)=\ln\!\left(1+\frac{1}{p-1}\right)-1<\frac{1}{p-1}-1<0,
$$
hence $g$ is strictly decreasing on $[3,\infty)$. A direct check shows
$$
g(3)=2\ln\left(\frac32\right)+\ln 3-2<0,
$$
so $g(p)<0$ for all $p\ge 3$. By \eqref{eq:dHpdp-exact} this implies
$$
\partial_p H_p(x)\;<\;0\quad\text{for all }p\ge 3\text{ and }x\le\frac12.
$$
In particular, since $x=x(p)=J_p(1/4)\leq \frac{1}{2}$, we have
$$
-\bigl(\partial_p H_p\bigr)\bigl(x(p)\bigr)\;\ge\;0.
$$

Altogether,
$$
f'(p)=\underbrace{\bigl[-(\partial_x H_p)(x(p))\,x'(p)\bigr]}_{>0}\;+\;\underbrace{\bigl[-(\partial_p H_p)(x(p))\bigr]}_{\ge 0}\;>\;0
\qquad\text{for all }p>2.
$$
Thus $f$ is strictly increasing on $[3,\infty)$. In particular, a direct calculation shows that $f(31)>3/8$ and so the result follows.
\end{proof}

We also need to use the following technical proposition concerning our bound in the proof of Lemma \ref{lemmaA}. Set $r_{EB}(\delta)$ equal to the right-hand side of the displayed inequality in Corollary \ref{rEB(p)}.

\begin{proposition}[{\bf Monotonicity of the Bound at $\delta=\frac14$ and $\delta=\frac13$}]\label{prop:bound-decreasing}
Let $p \ge 3$ and $n \ge 16$. Then the upper bound $r_{EB}(\delta)$ in Corollary \ref{rEB(p)} satisfies
\[
r_{EB}\!\left(\frac13\right) < r_{EB}\!\left(\frac14\right).
\]
\begin{proof} 
We first note that 
$$J' = J_p'(\delta) = \frac{1}{2}\left(1 - \frac{p}{p-1}\,\delta\right)^{-\frac{1}{2}} > 0 \,\,\,\textrm{and}\,\,\, J_p''(\delta) = \frac{p}{4(p-1)}\left(1 - \frac{p}{p-1}\,\delta\right)^{-\frac{3}{2}} > 0 .$$ 
Since $J(0)=0$ and 
$J((p-1)/p)=(p-1)/p$, 
we see that $J(\delta)\leq \delta$ on 
$\left[0, \frac{p-1}{p}\right]$.  
Moreover, $J'(\delta)\geq 1/2$, and so $J(\delta)\geq \delta/2$.

Differentiating $r_{EB}(\delta)$ term-by-term gives
\[
\begin{aligned}
\frac{d{r_{EB}}}{d\delta}
&= -\frac{nJ'}{\ln p}\,\ln(p-1)
-\frac{nJ'}{\ln p}\,\ln(1-J)
+\frac{nJ'}{\ln p}\,\ln J -\frac{J'}{(\ln p)(1-J)} \\
&\quad -\frac{J'}{(\ln p)\,J} + \frac{J'}{(2\ln p)\,J} + \frac{1}{\delta\,\ln p} + \frac{J'}{2n(\ln p)(1-J)^2} - \frac{(2\ln p)nJ'}{\bigl((2\ln p)(Jn-1)\bigr)^2}.
\end{aligned}
\tag{$*$}
\]

\noindent As the fourth, fifth, and last terms are negative, we see that 
\[
\frac{d{r_{EB}}}{d\delta} \le -\frac{nJ'}{\ln p}\,\ln\!\frac{(p-1)(1-J)}{J} + \frac{J'}{2\,(\ln p)\,J} + \frac{1}{\delta\,(\ln p)} + \frac{J'}{2n(\ln p)(1-J)^2}.
\]
Since $p\ge3$, and recalling that $J$ is  increasing on $[0, \frac{p-1}{p}]$, we have $$\ln\left(\frac{(p-1)(1-J)}{J}\right) \ge \ln\left(\frac{2(1-\delta)}{\delta}\right).$$ Moreover, as we saw above,  
$\delta/2 \le J \le \delta$ and hence $1-J \ge 1-\delta$. Also, $J' = 1/(2\sqrt{1-\frac{p}{p-1}\,\delta}) \ge 1/(2\sqrt{1-\delta})$. Multiplying by $\ln p>0$, substituting $J \ge \delta/2$ and $(1-J) \ge 1-\delta$, and using that $J' \leq 1$ for $\delta \in [\frac{1}{4},\frac{1}{3}]$ gives
\begin{align*}
(\ln p)\, {r_{EB}}'(\delta) &\le - n J' \ln\!\frac{2(1-\delta)}{\delta} + \frac{J'}{2J} + \frac{1}{\delta} + \frac{J'}{2n(1-\delta)^2}\\
& \le- \frac{n}{2\sqrt{1-\delta}} \ln\!\frac{2(1-\delta)}{\delta} + \frac{2}{\delta} + \frac{1}{2n(1-\delta)^2}.
\end{align*}
For $\delta\in[\frac14,\frac13]$ we have $\frac{1}{\sqrt{1-\delta}} \le \frac{1}{\sqrt{2/3}}$, $\ln\!\frac{2(1-\delta)}{\delta} \ge \ln 4$, $\frac{1}{\delta} \le 4$, and $(1-\delta) \ge \frac23$. Hence
\[
(\ln p)\, {r_{EB}}'(\delta) \le - \frac{n}{2} \ln 4 + 8 + \frac{9}{8n} < 0 \,\,\, \textrm{provided}\,\,\, n \ge 16,
\]
so ${r_{EB}}'(\delta)<0$ for all $\delta\in[\frac14,\frac13]$. By the Mean Value Theorem there exists $\xi\in(\frac14,\frac13)$ such that
\[
r_{EB}\!\left(\frac13\right) - r_{EB}\!\left(\frac14\right) = \frac1{12}\,{r_{EB}}'(\xi) < 0,
\]
which proves $r_{EB}(\frac13) < r_{EB}(\frac14)$.
\end{proof}
\end{proposition}

\subsection{The Finite-Length Plotkin Bound.}

We now introduce another upper bound that is especially useful for large
primes.  For every prime $p \ge 23$, by comparing the dominant linear terms, we see that the Plotkin
bound gives a better asymptotic estimate than the Elias-Bassalygo bound.  In addition, for every prime $p \ge 3$, the
Plotkin bound in this range also represents an improvement on the  $3n/8$ estimate in Theorem \ref{Gh+Zp} for sufficiently large $n$. 
Our goal in this subsection is to derive the finite-length Plotkin bound for $q$-ary codes and then establish the threshold dimensions for when the corresponding bound on the rank is better than the bound in Theorem \ref{Gh+Zp}. 
We first recall the classical Plotkin bound below in the
form given in \cite{RudraLect16}. 

\begin{theorem}[\cite{RudraLect16}]
\label{classical-plotkin}
Let $C \subset [q]^n$ be a $q$-ary code of block length $n$ and minimum Hamming
distance $d$.
\begin{enumerate}
\item If $d = (1 - \frac{1}{q})n$, then
\[
|C| \;\le\; 2qn.
\]
\item If
\[
d > \Bigl(1 - \frac{1}{q}\Bigr)n,
\]
then
\[
|C|
\;\le\;
\frac{q d}{\,q d - (q-1)n\,}.
\]
\end{enumerate}
\end{theorem}

The second part is the one relevant to our setting.  We now combine
Theorem~\ref{classical-plotkin} with a shortening argument of
\cite{RudraLect16} to derive a fully explicit finite-length
Plotkin inequality.

\begin{theorem}[{\bf Finite-Length Plotkin Bound}]
\label{plotkin(p)}
Let $C \subseteq \mathbb{F}_q^n$ be a code with size $|C|$ and minimum relative distance $\delta=\frac{d}{n}$, where $d$ is the minimum distance.
Then
\[
R_C
=
\frac{1}{n}\log_q |C|
\;\le\;
1
-
\frac{q}{q-1}\,\delta
+
\frac{3 + \log_q(\delta n)}{n}.
\]
In particular, taking the supremum over all such $C$, it follows that the rate satisfies the same inequality. 
\end{theorem}

\begin{proof}
Set
\[
n'
:=
\left\lfloor \frac{q}{q-1} d \right\rfloor - 1.
\]
For each $x \in [q]^{\,n-n'}$ define the shortened code
\[
C_x
=
\{ (c_{n-n'+1},\dots,c_n) : c \in C,\; (c_1,\dots, c_{n-n'}) = x \}
\subset [q]^{n'}.
\]
Each $C_x$ has minimum distance at least $d$ (any two elements coincide on
their first $n-n'$ coordinates).  Moreover,
\[
n' < \frac{q}{q-1}d
\quad\Longrightarrow\quad
d > \Bigl(1 - \frac{1}{q}\Bigr)n'.
\]
Thus, applying Part (2) of Theorem~\ref{classical-plotkin} to $C_x$ gives
\[
|C_x|
\;\le\;
\frac{q d}{q d - (q-1)n'}.
\]
Since the denominator is a positive integer,
\[
|C_x| \le qd.
\]
Summing over all $x \in [q]^{\,n-n'}$ yields
\[
|C|
\;\le\;
q^{\,n-n'} \cdot q d.
\]
Since $n' \ge \frac{q}{q-1}d - 2$, we obtain
\[
n - n'
\;\le\;
n - \frac{q}{q-1}d + 2.
\]
Hence
\[
|C|
\;\le\;
q^{\,n - \frac{q}{q-1}d + 2} \cdot qd.
\]
Writing $qd = q^{1 + \log_q(\delta n)}$ with $d = \delta n$, we obtain
\[
|C|
\;\le\;
q^{\,n - \frac{q}{q-1}\delta n + 3 + \log_q(\delta n)}.
\]

The bound for $R_C$ can be seen by first applying $\log_q$ to the above inequality and then dividing by $n$.
\end{proof}

We now apply this to embeddings of $\mathbb{Z}_p^r$ in $\mathbb{Z}_p^n$, giving
the Plotkin analog of Corollary \ref{rEB(p)}.

\begin{corollary}
\label{rPlotkin(p)}
Fix $\delta\in(0, 1-1/p)$, $p$ a prime. If $\rho: \Z_p^r\rightarrow \Z_p^n$ 
is an injection (not necessarily linear) such that $|\rho(\tau_p)|\geq \delta n$ 
for all non-trivial $\tau_p\in \Z_p^r$.
Then
\[
r
\;\le\;
\Bigl(1 - \frac{p}{p-1}\,\delta\Bigr)n
+
\log_p(n) + \log_p(\delta) +3.
\]
\end{corollary}

Just as in the case for the finite-length Elias-Bassalygo bound, we also need to use the following technical proposition concerning our bound in the proof of Lemma \ref{lemmaB}. Set $r_{Pl}(\delta)$ equal to the right-hand side of the displayed inequality in Corollary \ref{rPlotkin(p)}.

\begin{proposition}[{\bf Monotonicity of the Plotkin bound at $\delta=\tfrac14$ and $\delta=\tfrac13$}]
\label{monotone2}
Let $p \ge 3$ and $n \ge 3$. Then
\[
r_{\mathrm{Pl}}\!\left(\frac{1}{3}\right)
<
r_{\mathrm{Pl}}\!\left(\frac{1}{4}\right).
\]
\end{proposition}

\begin{proof}
By definition,
\[
r_{\mathrm{Pl}}(\delta)
=
\Bigl(1 - \frac{p}{p-1}\,\delta\Bigr)n
+
\log_p(n) + \log_p(\delta) + 3.
\]
Thus
\begin{align*}
r_{\mathrm{Pl}}\!\left(\frac{1}{3}\right)
-
r_{\mathrm{Pl}}\!\left(\frac{1}{4}\right)
&=
-\frac{p}{p-1}\Bigl(\frac{1}{3} - \frac{1}{4}\Bigr)n
+
\log_p\!\left(\frac{4}{3}\right) \\
&=
-\frac{p}{p-1}\,\frac{n}{12}
+
\log_p\!\left(\frac{4}{3}\right).
\end{align*}
For $p \ge 3$ we have $\dfrac{p}{p-1} \ge \dfrac{3}{2}$, hence
\[
-\frac{p}{p-1}\,\frac{n}{12}
\le
-\frac{3}{2}\cdot\frac{n}{12}
=
-\frac{n}{8},
\]
and, using $\ln p \ge \ln 3$,
\[
\log_p\!\left(\frac{4}{3}\right)
=
\frac{\ln(4/3)}{\ln p}
\le
\frac{\ln(4/3)}{\ln 3}
=
\log_3\!\left(\frac{4}{3}\right)
< 0.3.
\]
Thus, for $n \ge 3$,
\[
r_{\mathrm{Pl}}\!\left(\frac{1}{3}\right)
-
r_{\mathrm{Pl}}\!\left(\frac{1}{4}\right)
\le
-\frac{3}{8}
+
\log_3\!\left(\frac{4}{3}\right)
< -\frac{3}{8} + 0.3 < 0.
\]
In all cases $p \ge 2$, $n \ge 3$, we therefore have
$r_{\mathrm{Pl}}(1/3) < r_{\mathrm{Pl}}(1/4)$, as claimed.
\end{proof}

Before establishing the thresholds  for the Plotkin bound,  we recall the definition of the Lambert $W$-function,  a special function that naturally arises in the proof of Proposition \ref{LambertW} when calculating these thresholds.

\begin{definition}[{\bf The Lambert $W$--function}]

The \emph{Lambert $W$--function} is defined implicitly as the (multivalued) inverse
of the map $x \mapsto x e^{x}$, that is, $W(z)$ satisfies
\[
W(z)e^{W(z)} = z, \quad z\in \C.
\]
\end{definition}
\begin{remark}
\label{remark:LambertW}

For real arguments $z \in [-e^{-1},0)$, the above equation has exactly two real solutions, denoted by $W_0(z)$ and $W_{-1}(z)$, corresponding respectively to the principal branch and the lower branch. These satisfy
\[
W_{-1}(z) \le -1 \le W_0(z) < 0.
\]
The Lambert $W$--function naturally arises when solving equations of the form
\[
x e^{x} = c.
\]
For a detailed treatment of the Lambert W-function, the reader may consult \cite{CorlessLambertW}.
\end{remark}

Finally, we prove the following technical proposition that anchors the induction in the proof of Theorem \ref{main2}. As a bonus, one of the quantities in the proposition gives an estimate for when the bound in Theorem \ref{main2} is tighter than the bound in Theorem \ref{Gh+Zp}.

\begin{proposition}[{\bf Lambert--$W$ thresholds for the Plotkin bound}]
\label{LambertW}
Let $p\ge3$ be a prime and, for $n\ge13$, let
\[
\widetilde{r}_{\mathrm{Pl}}(n,p)
:=\frac{3p-4}{8(p-1)}\,n+\log_p n+\log_p\!\Bigl(\frac{p^3}{8}\Bigr).
\]

Then the following hold:
\begin{enumerate}
\item $\widetilde{r}_{\mathrm{Pl}}(n,p)\le \frac{3}{8}n$ for all $n\ge N_0(p)$, where $N_0(p)$ corresponds to the larger of the two solutions of $\widetilde{r}_{\mathrm{Pl}}(n,p)=\frac{3}{8}n$; and
\item $\widetilde{r}_{\mathrm{Pl}}(n,p)\ge \frac{3}{8}n+2$ precisely for
$n\in[\,n_{-}(p),\,n_+(p)\,]$, where $0<n_{-}(p)<13<n_+(p)$ are the two solutions of
$\widetilde{r}_{\mathrm{Pl}}(n,p)=\frac{3}{8}n+2$.
\end{enumerate}
\end{proposition}

\begin{proof}
Define
\[
F_p(n):=\frac{3}{8}n-\widetilde{r}_{\mathrm{Pl}}(n,p),
\qquad
G_p(n):=\frac{3}{8}n+2-\widetilde{r}_{\mathrm{Pl}}(n,p).
\]
A direct computation shows
\[
F_p'(n)=G_p'(n)=\frac{1}{8(p-1)}-\frac{1}{n\ln p}
\quad \textrm{and} \quad
F_p''(n)=G_p''(n)=\frac{1}{n^2\ln p}>0,
\]
so both functions are strictly convex on $(0,\infty)$ and therefore have at most
two positive zeros.

\smallskip
\noindent
{\bf Crossover with ${\bf 3n/8}$:}
The equation $F_p(n)=0$ is equivalent to
\begin{equation}\label{cross1}
\log_p\!\Bigl(\frac{p^3}{8}n\Bigr)=\frac{1}{8(p-1)}\,n.
\end{equation}
Multiplying by $\ln p$ and setting
\[
\alpha=\frac{\ln p}{8(p-1)} \quad and \quad x=-\alpha n,
\]
Equation \ref{cross1} becomes
\[
x e^x=-\frac{\ln p}{p^3(p-1)}.
\]
Since $-\frac{1}{e}< -\frac{\ln p}{p^3(p-1)}< 0$ 
for all $p\ge3$, this equation has two
real solutions given by
\[
x=W_0\!\left(-\frac{\ln p}{(p-1)p^3}\right)
\quad\text{and}\quad
x=W_{-1}\!\left(-\frac{\ln p}{(p-1)p^3}\right).
\]
The corresponding positive solutions in $n=-x/\alpha$ yield two crossover points,
with the larger one equal to
\[
N_0(p)
=
-\,\frac{8(p-1)}{\ln p}\,
W_{-1}\!\left(-\frac{\ln p}{(p-1)p^3}\right).
\]
Since $F_p$ is strictly convex and satisfies
$F_p(n)\to+\infty$ as $n\to0^+$ and $n\to\infty$, it follows that
\[
F_p(n)\ge0 \quad\text{for all } n\ge N_0(p),
\]
that is,\ $\widetilde{r}_{\mathrm{Pl}}(n,p)\le\frac{3}{8}n$ for all $n\ge N_0(p)$.

\smallskip
\noindent
{\bf Crossover with ${\bf 3n/8+2}$:}
Similarly, the equation $G_p(n)=0$ is equivalent to
\[
\log_p\!\Bigl(\frac{p^3}{8}n\Bigr)=\frac{1}{8(p-1)}\,n+2,
\]
which, after the same substitution, becomes
\[
x e^x=-\frac{\ln p}{p(p-1)}.
\]
Since $-\frac{\ln p}{p(p-1)}\in(-1/e,0)$ for all $p\ge3$, this equation again has
two real solutions,
\[
x=W_0\!\left(-\frac{\ln p}{p(p-1)}\right)
\quad\text{and}\quad
x=W_{-1}\!\left(-\frac{\ln p}{p(p-1)}\right),
\]
corresponding to the two positive roots
\[
n_{-}(p)
=
-\,\frac{8(p-1)}{\ln p}\,
W_0\!\left(-\frac{\ln p}{p(p-1)}\right),
\qquad
n_+(p)
=
-\,\frac{8(p-1)}{\ln p}\,
W_{-1}\!\left(-\frac{\ln p}{p(p-1)}\right),
\]
with $0<n_{-}(p)<n_+(p)$. By the strict convexity of $G_p$, its sign is negative
precisely on the interval $[\,n_{-}(p),\,n_+(p)\,]$, yielding the stated
inequalities. 

Using the monotonicity of the Lambert--$W$ expressions in $p$, it can be checked that the smaller root $n_{-}(p)$ is decreasing and satisfies $n_{-}(3)<13$, while the larger root $n_-(p)$ is increasing with $\lfloor n_-(3)\rfloor=39$; hence
\[
n_{-}(p) < 13 < n^{*}(p)
\]
for all odd primes $p\ge3$.

\end{proof}

%%%%%%%%%%%%%%%%%%%%%%%%%%%%%%%%
\section{The Topology and Geometry of Positive Curvature}\label{3}
%%%%%%%%%%%%%%%%%%%%%%%%%%%%%%%%

In this section, we gather some of the topological and geometric results used in the proofs of Theorems \ref{main2} and \ref{main}, as well as bounds on the symmetry rank of a $\Z_p$-torus action. 
We begin by recalling the Connectedness Lemma from \cite{Wilk}.

\begin{CL} \label{CL}\cite{Wilk}
    Let $M^n$ be a closed Riemannian manifold with positive sectional curvature. The following hold:
    \begin{enumerate}
        \item {If $N^{n-k}$ is a closed, totally geodesic submanifold of $M$, then the inclusion map $N^{n-k} \hookrightarrow (n-2k+1)$-connected.}
        \item{If $N_1^{n-k_1}$ and $N_2^{n-k_2}$ are closed, totally geodesic submanifolds of $M$ with $k_1 \leq k_2$ and $k_1+k_1 \leq n$, then the inclusion $N_1^{n-k_1} \cap N_2^{n-k_2} \hookrightarrow N_2^{n-k_2}$ is $(n-k_1-k_2)$-connected. }
    \end{enumerate}
\end{CL}

The next result, a reformulation of the proof of Lemma 6.2 from \cite{Wilk}, is an application of the Connectedness Lemma. 

\begin{theorem}\label{cohm1}\cite{Wilk}
    Let $N^{n-k} \subseteq M^n$ be a totally geodesic embedding of a closed, simply connected, positively curved manifold such that $k \leq \frac{n+3}{4}$. If $N$ has the cohomology ring of a sphere, complex projective space, or quaternionic projective space, then the same holds for $M$. Moreover, $k$ is even in the case of complex projective spaces, and $k$ is divisible by four in the case of quaternionic projective spaces.
\end{theorem}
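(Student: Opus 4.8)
The plan is the classical ``Connectedness Lemma plus Poincar\'e duality'' argument, so the only inputs are part (1) of the Connectedness Lemma, the hypothesis $k\le\frac{n+3}{4}$, and the ring structure of $H^*(N)$. First I would record the consequences of the Connectedness Lemma: the inclusion $\iota\colon N^{n-k}\hookrightarrow M^n$ is $l$-connected with $l=n-2k+1$, and $k\le\frac{n+3}{4}$ gives $l\ge\frac{n-1}{2}$, so $2l\ge n-1$ and (for $n$ in the relevant, large range) $k<l$ and $l\ge 2$. Since $N$ is simply connected and $l\ge 2$, $M$ is simply connected, hence orientable, so $H^n(M;\Z)\cong\Z$ and $H_1(M;\Z)=0$, $\iota^*$ is an isomorphism on $H^i(-;\Z)$ for $i<l$ and injective for $i=l$, and $\iota_*$ is an isomorphism on $H_i(-;\Z)$ for $i<l$.

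If $N$ is a cohomology sphere, then $H^i(N;\Z)=0$ for $0<i<n-k$ and $n-k\ge l$, so $\iota^*$ forces $H^i(M;\Z)=0$ for $0<i<l$; injectivity at degree $l$ (using $l<n-k$ when $k\ge 2$, and $H^{n-1}(M)\cong H_1(M)=0$ via Poincar\'e duality when $k=1$) disposes of degree $l$ as well. Poincar\'e duality over each field then kills degrees $n-l\le i<n$, and since $2l\ge n-1$ (so $n-l\le l+1$) no gap remains; being simply connected, $M$ is an integral cohomology $n$-sphere.

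In the complex (resp.\ quaternionic) projective case, write $H^*(N;\Z)=\Z[x]/(x^{m+1})$ with $\deg x=2$ (resp.\ $4$) and $n-k=2m$ (resp.\ $4m$), and lift $x$ to $y\in H^2(M;\Z)$ (resp.\ $H^4(M;\Z)$) using that $\iota^*$ is an isomorphism in those degrees. Let $\eta\in H^k(M;\Z)$ be Poincar\'e dual to $\iota_*[N]$; the projection formula $\langle\iota^*\alpha,[N]\rangle=\langle\alpha\cup\eta,[M]\rangle$ applied to $\alpha=y^m$ gives $\langle y^m\cup\eta,[M]\rangle=\langle x^m,[N]\rangle=\pm1$, so $y^m\cup\eta$ generates $H^n(M;\Z)$; in particular $\eta\ne0$, so $H^k(M;\Z)\cong H^k(N;\Z)$ is non-zero, and since the latter vanishes unless $k$ is even (resp.\ divisible by $4$) we obtain the asserted divisibility of $k$, hence of $n=2m+k$. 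As $\iota^*\eta$ and $\iota^*(y^{k/2})=x^{k/2}$ (resp.\ $y^{k/4}$, $x^{k/4}$) both generate $H^k(N;\Z)$, injectivity of $\iota^*$ at degree $k$ gives $\eta=\pm y^{k/2}$ (resp.\ $\pm y^{k/4}$), so $y^{n/2}$ (resp.\ $y^{n/4}$) generates $H^n(M;\Z)$.

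To finish I would assemble the ring: from $\iota^*$ in degrees below $l$ together with $\iota^*(y^j)=x^j$, $H^*(M;\Z)$ in degrees $\le l$ is $\Z\langle y^{i/2}\rangle$ (resp.\ $\Z\langle y^{i/4}\rangle$) in degrees divisible by $2$ (resp.\ $4$) and $0$ otherwise; because $n$ is even (resp.\ $4\mid n$) and $2l\ge n-1$ upgrades to $2l\ge n$, Poincar\'e duality propagates this description to all degrees with no gap and shows $H^*(M;\Z)$ is torsion-free; then the perfect $\Z$-valued cup-product pairing $H^{2j}(M)\times H^{n-2j}(M)\to H^n(M)$, combined with $y^{n/2}$ (resp.\ $y^{n/4}$) being a generator, forces each $y^j$ to generate, giving $H^*(M;\Z)\cong\Z[y]/(y^{n/2+1})$ (resp.\ $\Z[y]/(y^{n/4+1})$). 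I expect the main obstacle to be the bookkeeping at the middle dimensions: ensuring that the range pinned down by $\iota^*$ (degrees $\le l$) and the range pinned down by Poincar\'e duality (degrees $\ge n-l$) overlap with no gap, which is exactly where the sharp bound $k\le\frac{n+3}{4}$, i.e.\ $2l\ge n-1$, is used and which simultaneously forces the parity of $n$; a secondary subtlety is upgrading ``$y^{n/2}$ (resp.\ $y^{n/4}$) is non-zero'' to ``is a generator'', which needs the torsion-freeness of $H^*(M;\Z)$ so that Poincar\'e duality is a perfect pairing over $\Z$.
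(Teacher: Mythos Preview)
Your argument is correct and is essentially the classical one. Note, however, that the paper does not give its own proof of this statement: it is quoted from Wilking (as a reformulation of his Lemma~6.2), so there is no ``paper's proof'' to compare against directly. That said, the paper does state the Periodicity Lemma immediately afterward, and that lemma is precisely the packaged version of what you do by hand: the class $\eta\in H^k(M;\Z)$ Poincar\'e dual to $\iota_*[N]$ is the element $e$ appearing there, and the assertion that $\cup\,e$ is an isomorphism $H^i(M)\to H^{i+k}(M)$ in the range $\ell\le i\le n-k-\ell$ (with $\ell=0$ here, by the Connectedness Lemma) is exactly the periodicity you extract from the projection formula together with Poincar\'e duality. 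So Wilking's route is: Connectedness Lemma $\Rightarrow$ Periodicity Lemma $\Rightarrow$ $H^*(M)$ is $k$-periodic $\Rightarrow$ read off the ring from $H^*(N)$; you have unwound the middle step explicitly. Both approaches buy the same thing; invoking the Periodicity Lemma as a black box shortens the write-up and avoids the middle-degree bookkeeping you flag at the end, while your hands-on version makes transparent why the Poincar\'e dual class $\eta$ must equal $\pm y^{k/2}$ (resp.\ $\pm y^{k/4}$) and hence why $k$ has the asserted divisibility.
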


Before presenting a few more useful results, we recall the definition of periodic cohomology and the Periodicity Lemma, see Lemma 2.2 of \cite{Wilk}. %, which the authors of \cite{KKSS} refer to as the Periodicity Lemma.

\begin{definition}[\bf{$\bf{k}$-periodicity}]\label{periodicity}
    Let $M^n$ be a connected, $n$-dimensional manifold. We say that $x$ induces {\em $k$-periodicity}, or $M^n$ has \text{$k$-periodic cohomology} provided there exists $x \in H^k(M^n;\mathbb{Z})$ such that the homomorphism $H^i(M^n; \Z) \xrightarrow{\text{$\cup_x$}}H^{i+k}(M^n; \Z)$ is surjective for $0 \leq i < n- k$ and injective for $0 < i \leq n- k$.
\end{definition}

\begin{PL} \label{PL} \cite{Wilk}
    If an inclusion $N^{n-k} \hookrightarrow M^n$ of closed, orientable manifolds is $(n-k-\ell)$-connected with $n-k-2\ell>0$, then there exists $e \in H^k(M;\Z)$ such that the homomorphism $H^i(M^n; \Z) \xrightarrow{\text{$\cup_e$}}H^{i+k}(M^n; \Z)$ is a surjection for $i=\ell$, and injection for $i+k=n-\ell$, and an isomorphism everywhere in between.
\end{PL}

 The first one of these, Proposition 3.3 in \cite{KKSS}, is a variation on the situation above in which one assumes the existence of two closed, totally geodesic submanifolds, $N_1$ and $N_2$, one of which has periodic cohomology. By making additional assumptions on the codimensions of $N_1$ and $N_2$, we may conclude that $M^n$ also has periodic cohomology. 

\begin{proposition} \label{cohm2} \cite{KKSS}
    Let $M^n$ be a closed, simply connected, positively curved Riemannian manifold. Suppose that $N_1^{n-k_1}$ and $N_2^{n-k_2}$ are two closed, totally geodesic submanifolds of $M$ with $k_1 \leq k_2$. Let $R$ be $\Z$ or $\Z_p$, for some prime $p$. The following hold:
    \begin{enumerate}
        \item {Suppose that $4k_1 \leq n+3$ and $k_1+2k_2 \leq n+1$. If $N_2$ is an $R$-cohomology sphere, then so is $M$.}
        \item {Suppose that $4k_1 \leq n+3$ and $g+k_1+2k_2\leq n+3$ for some $g \geq 2$ that divides $k_1$. If $N_2$ has $g$-periodic $R$-cohomology, then so does $M$.}
    \end{enumerate}
\end{proposition}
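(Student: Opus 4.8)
The plan is to run the same two-step argument for both parts: use $N_1$ to force a periodicity on all of $M$, use $N_2$ to pin down the cohomology of $M$ in low degrees, and then let Poincaré duality together with that periodicity carry the information through the remaining degrees. Throughout, cohomology is taken with coefficients in $R$; note that $M$ is simply connected, hence $R$-orientable, and that a manifold which is an $R$-cohomology sphere or has $g$-periodic $R$-cohomology is automatically $R$-orientable, so Poincaré duality is available where needed, and the high connectivity furnished by the Connectedness Lemma makes the relevant submanifolds simply connected in the ranges of $k_1,k_2$ at issue. As a first move, by the Connectedness Lemma, $N_1^{n-k_1}\hookrightarrow M^n$ is $(n-2k_1+1)$-connected, i.e.\ $(n-k_1-\ell)$-connected with $\ell=k_1-1$; since $4k_1\le n+3$ we get $n-k_1-2\ell=n-3k_1+2>0$, so the Periodicity Lemma (whose proof goes through verbatim over $R$) produces a class $e\in H^{k_1}(M;R)$ for which $\cup e\colon H^i(M)\to H^{i+k_1}(M)$ is an isomorphism for $k_1-1\le i\le n-2k_1+1$.

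For part (1): the Connectedness Lemma also gives that $N_2\hookrightarrow M$ is $(n-2k_2+1)$-connected, and $k_1+2k_2\le n+1$ yields $k_1\le n-2k_2+1$, so $H^i(M)\cong H^i(N_2)$ for $0<i\le k_1-1$ and $H^{k_1}(M)\hookrightarrow H^{k_1}(N_2)$. As $N_2$ is an $R$-cohomology sphere of dimension $n-k_2>k_1$, all of these groups vanish; in particular $H^{k_1}(M)=0$, so $e=0$. Surjectivity of $\cup e=0$ in the range above then forces $H^j(M)=0$ for $2k_1-1\le j\le n-k_1+1$. The only degrees of $(0,n)$ not yet accounted for are $k_1+1\le j\le 2k_1-2$ and $n-k_1+2\le j\le n-1$; I would kill these with Poincaré duality, $H^j(M)\cong H^{n-j}(M)$, observing that $4k_1\le n+3$ is exactly what places $n-j$ inside $[2k_1-1,\,n-k_1+1]$ in the first case and inside $(0,k_1)$ in the second. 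Together with $H^0(M)=H^n(M)=R$ this gives that $M$ is an $R$-cohomology $S^n$.

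For part (2): again $4k_1\le n+3$ supplies $e\in H^{k_1}(M;R)$ as above, and $N_2$ now carries a $g$-periodicity class $x_2\in H^g(N_2;R)$. Using the connectivity of $N_2\hookrightarrow M$ and $g+k_1+2k_2\le n+3$ (which forces $g\le n-2k_2$ and $k_1\le n-2k_2+1$), I would pull $x_2$ back to $x\in H^g(M;R)$, so that $\cup x$ inherits surjectivity and injectivity in a low range of degrees, and I would identify $H^{k_1}(M;R)$ with a subgroup of $H^{k_1}(N_2;R)$, which is free of rank one precisely because $g\mid k_1$. Since $\cup e$ and $(\cup x)^{k_1/g}$ both act as isomorphisms on the overlapping range and $H^{k_1}(N_2)$ has rank one, $e$ must equal a unit times $x^{k_1/g}$; hence the $k_1$-periodicity of $M$ is realized by iterating $\cup x$. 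Feeding the low-range behaviour of $\cup x$ into this, and converting surjectivity into injectivity in complementary degrees via Poincaré duality, one verifies that $\cup x\colon H^i(M)\to H^{i+g}(M)$ is surjective for $0\le i<n-g$ and injective for $0<i\le n-g$; the two codimension inequalities are precisely what makes the range controlled by $N_2$ overlap the range controlled by $e$. Thus $M$ has $g$-periodic $R$-cohomology.

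The hard part is part (2): stitching the $k_1$-periodicity coming from $N_1$ and the $g$-periodicity coming from $N_2$ into a single $g$-periodicity of $M$. The delicate point is the identification of $e$ with $x^{k_1/g}$ up to a unit — which is where $g\mid k_1$ and the rank-one-ness of $H^{k_1}(N_2;R)$ enter — together with the boundary-degree bookkeeping: one must track exactly which degrees are controlled by $N_2$, which by $e$, and which only become accessible through Poincaré duality, and it is this endpoint analysis that pins down the precise form of the hypotheses $4k_1\le n+3$, $k_1+2k_2\le n+1$, and $g+k_1+2k_2\le n+3$. Part (1) is then the degenerate (``sphere'') case of this scheme and is comparatively routine once the template is in place.
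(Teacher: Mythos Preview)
The paper does not supply its own proof of this proposition; it is imported from \cite{KKSS} (their Proposition~3.3) and used as a black box. Your outline follows exactly the strategy of the original argument there: apply the Connectedness Lemma to $N_1$ and feed it into the Periodicity Lemma to obtain a $k_1$-periodicity element $e$ on $M$, use the Connectedness Lemma for $N_2$ to control the low-degree cohomology of $M$, and close the remaining degrees with Poincar\'e duality.

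Your Part~(1) is essentially complete, with only minor bookkeeping slips: the injection at $i=n-2k_1+1$ forces the \emph{domain} $H^{n-2k_1+1}(M)$ to vanish, not the codomain $H^{n-k_1+1}(M)$; and the isomorphism range of $\cup e$ already kills $H^j(M)$ for $k_1\le j\le n-2k_1$ on the domain side, so the interval $k_1+1\le j\le 2k_1-2$ you single out is in fact already covered. None of this affects the conclusion once Poincar\'e duality is applied. For Part~(2) your sketch is in the right direction but glosses over the genuine work. The assertion that $e$ equals a unit times $x^{k_1/g}$ does not follow just from ``both act as isomorphisms on an overlapping range'': you need that $H^{k_1}(M;R)$ is cyclic \emph{and} that $x^{k_1/g}$ generates it, and over $R=\Z$ the latter requires checking that the restriction $H^g(M)\to H^g(N_2)$ is an isomorphism (not merely an injection) so that $x^{k_1/g}$ hits a generator of $H^{k_1}(N_2)$ rather than a proper multiple. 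Similarly, extending the $g$-periodicity of $\cup x$ from the low range to all of $M$ demands that the interval controlled by $N_2$ actually meets the interval controlled by $e$; the hypothesis $g+k_1+2k_2\le n+3$ makes this overlap hold only with equality at the boundary, so the endpoint degrees must be handled individually. These are precisely the details carried out in \cite{KKSS}, and your sketch correctly identifies them as the hard part without executing them.
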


Since Theorem \ref{ucover} and Proposition \ref{cohm2} involve simply connected manifolds,
we now recall Theorem 3.4 from \cite{KKSS}, which shows that the information on the fundamental group of the manifold and the cohomology ring of its universal cover can lead to information about homotopy equivalence. 

\begin{theorem} \cite{KKSS} \label{ucover}
    Let $M^n$ be a closed, smooth manifold, and let $\widetilde{M}^n$ denote the universal cover. The following hold:
    \begin{enumerate}
        \item {If $\pi_1(M)$ is cyclic and $\widetilde{M}$ is a cohomology sphere, then $M$ is homotopy equivalent to $S^n$, $\RP^n$, or $S^n/\Z_l$ for some $l \geq 3$. In the last case, $n$ is odd; and}
        \item {If $M$ is simply connected and has the cohomology of $\CP^{\frac{n}{2}}$, then $M$ is homotopy equivalent to $\CP^{\frac{n}{2}}$.}
    \end{enumerate}
\end{theorem}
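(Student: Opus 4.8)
The plan is to handle both parts by the same two‑step strategy: first pin down the universal cover up to homotopy using the Hurewicz and Whitehead theorems, and then determine the homotopy type of $M$ itself. For part (2) the first step is essentially the whole argument; for part (1) the case of nontrivial $\pi_1(M)$ requires, in addition, the classical homotopy classification of spherical space forms.

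For part (1): since $M$ is closed and $\widetilde M$ is its universal cover, the covering $\widetilde M\to M$ is finite‑sheeted, so $\pi_1(M)$ is finite, and being cyclic by hypothesis it equals $\Z_\ell$ for some $\ell\ge1$ acting freely on $\widetilde M$. As $\widetilde M$ is simply connected with the integral cohomology — hence homology — of $S^n$, the Hurewicz theorem gives $\pi_n(\widetilde M)\cong H_n(\widetilde M)\cong\Z$; a generator is represented by a degree‑one map $S^n\to\widetilde M$, which is a homology isomorphism of simply connected spaces, so Whitehead's theorem gives $\widetilde M\simeq S^n$. If $\ell=1$ we are done. For $\ell\ge2$ I would first record the constraint on $n$: Euler characteristic is multiplicative in finite covers, so $1+(-1)^n=\chi(\widetilde M)=\ell\,\chi(M)$, which for $n$ even forces $\ell\mid2$; hence if $\ell\ge3$ then $n$ is odd, as the statement asserts. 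It then remains to identify the homotopy type of the quotient $M=\widetilde M/\Z_\ell$ of a free $\Z_\ell$‑action on a homotopy $n$‑sphere. For $\ell=2$: the Gysin (transfer) sequence of the double cover with $\Z_2$‑coefficients, together with $H^*(\widetilde M;\Z_2)\cong H^*(S^n;\Z_2)$, forces $H^*(M;\Z_2)\cong\Z_2[w]/(w^{n+1})$ with $|w|=1$, exactly the ring of $\RP^n$; the class $w$ gives a map $M\to B\Z_2=\RP^\infty$ which, since $M$ is an $n$‑dimensional complex, factors up to homotopy through $\RP^n$, yielding $f\colon M\to\RP^n$ that is an isomorphism on $\pi_1$ and on $\Z_2$‑cohomology, and one checks by passing to universal covers that $f$ is a homotopy equivalence. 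For $\ell\ge3$ (so $n$ odd), $M$ is a closed manifold with $\pi_1(M)=\Z_\ell$ whose universal cover is a homotopy $n$‑sphere; by the classical homotopy classification of spherical space forms with cyclic fundamental group — an obstruction‑theoretic argument exploiting the $2$‑periodicity of the cohomology of $\Z_\ell$, so that the relevant $k$‑invariants are realized by a linear free action on $S^n$ — $M$ is homotopy equivalent to a lens space $S^n/\Z_\ell$. Combining the three cases proves part (1).

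For part (2): pick a generator $a\in H^2(M;\Z)$. Since $\CP^\infty=K(\Z,2)$, the class $a$ is represented by a map $g\colon M\to\CP^\infty$; because $\dim M=n$, cellular approximation lets me homotope $g$ into the $n$‑skeleton, which lies in $\CP^{n/2}$, giving $f\colon M\to\CP^{n/2}$ with $f^*$ carrying the generator of $H^2(\CP^{n/2})$ to $a$. Since $H^*(M;\Z)\cong H^*(\CP^{n/2};\Z)$ as rings, $a^{i}$ generates $H^{2i}(M;\Z)$ for $0\le i\le n/2$, so $f^*$ is an isomorphism in every degree; as $M$ and $\CP^{n/2}$ are simply connected with finitely generated free homology, $f$ is a homology isomorphism, hence a homotopy equivalence by Whitehead's theorem.

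The main obstacle is the $\ell\ge3$ case (and, to a lesser extent, the $\ell=2$ case) of part (1): unlike the universal‑cover step and part (2), which follow directly from Hurewicz–Whitehead, recognizing the homotopy type of the quotient genuinely requires the obstruction‑theoretic classification of free cyclic actions on homotopy spheres, since Whitehead's theorem cannot be applied directly to the non‑simply‑connected manifold $M$.
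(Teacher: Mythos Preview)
The paper does not prove this theorem; it is quoted from \cite{KKSS} and used as a black box, so there is no proof in the present paper to compare your argument against. That said, your outline is the standard one and is essentially correct: Part~(2) via a generator of $H^2$ and cellular approximation into $\CP^{n/2}$ is the classical argument, and in Part~(1) the identification $\widetilde M\simeq S^n$ by Hurewicz--Whitehead, the Euler-characteristic parity constraint, and the appeal to the homotopy classification of cyclic spherical space forms for $\ell\ge 3$ are all fine.

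The one genuine gap is your separate treatment of $\ell=2$. Factoring the classifying map $M\to\RP^\infty$ through $\RP^n$ produces an $f\colon M\to\RP^n$ that is an isomorphism on $\pi_1$ and on $\Z_2$-cohomology, but those two properties only force the lift $\tilde f\colon\widetilde M\to S^n$ to have \emph{odd} degree, not degree $\pm 1$. Indeed, any odd-degree self-map of $\RP^n$ (built by suspending $z\mapsto z^d$ on $S^1$ for odd $d$) is an isomorphism on $\pi_1$ and on $\Z_2$-cohomology without being a homotopy equivalence, so ``one checks by passing to universal covers'' does not finish the argument as written. The factorizations of $M\to\RP^\infty$ through $\RP^n$ form a torsor over $H^n(M;\pi_n(S^n))\cong\Z$, and you must pick (or modify $f$ on the top cell to obtain) the one with $\deg\tilde f=\pm 1$. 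Alternatively---and more in keeping with your own closing remark---simply drop the ad hoc $\ell=2$ argument and absorb that case into the same space-form classification you already invoke for $\ell\ge 3$: a closed manifold with finite cyclic $\pi_1$ and universal cover a homotopy sphere is homotopy equivalent to a linear space form, and for $\ell=2$ that form is $\RP^n$.
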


Combining Proposition \ref{cohm2} and Theorem \ref{ucover} we obtain the following lemma.
\begin{lemma}\label{combo} Let $M^n$ be a closed,  positively curved Riemannian $n$-manifold, $n\geq 5$. Suppose that $N_1^{n-k_1}$ and $N_2^{n-k_2}$ are two closed, totally geodesic submanifolds of $M$ with $ k_1\leq k_2$, $4k_1\leq n+3$, and $k_1+2k_2\leq n+1$. Then 
\begin{enumerate}
\item If $\widetilde{N_2}$ is a cohomology sphere and $N_2$ has cyclic fundamental group, then  
 $M$ is homotopy equivalent to $S^n$, $\RP^n$, or a lens space; and
 \item If  $\widetilde{N_2}$ is cohomology complex projective space,  $2|k_1$ and $\pi_1(N_2)$ is trivial, then $M$ is homotopy equivalent to $\CP^{n/2}$.
\end{enumerate}
  \end{lemma}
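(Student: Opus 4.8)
The plan is to reduce both statements to the simply connected case via the Connectedness Lemma, apply Proposition~\ref{cohm2}, and finish with Theorem~\ref{ucover}. First I would extract the needed connectivity. By Lemma~\ref{CL}(1) the inclusion $N_1\hookrightarrow M$ is $(n-2k_1+1)$-connected, and since $4k_1\le n+3$ and $n\ge 5$ we get $n-2k_1+1\ge\frac{n-1}{2}\ge 2$, hence $\pi_1(N_1)\cong\pi_1(M)$. Likewise $N_2\hookrightarrow M$ is $(n-2k_2+1)$-connected, and $k_1+2k_2\le n+1$ gives $n-2k_2+1\ge k_1$; for $k_1\ge 2$ (automatic under the hypothesis $2\mid k_1$ of part~(2), and the relevant range since in the applications the codimensions of fixed-point sets are even) this yields $\pi_1(N_2)\cong\pi_1(M)$, and in all cases $\pi_1(N_2)\twoheadrightarrow\pi_1(M)$. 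By Bonnet--Myers $\pi_1(M)$ is finite, so the universal cover $\pi\colon\widetilde M\to M$ is again a closed, positively curved $n$-manifold, and since each $\pi_1(N_i)\to\pi_1(M)$ is an isomorphism the preimage $\pi^{-1}(N_i)$ is connected (surjectivity) and simply connected (injectivity), hence equals the universal cover $\widetilde{N_i}$, a closed totally geodesic submanifold of $\widetilde M$ of codimension $k_i$, as $\pi$ is a local isometry.

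For part~(1): since $\pi_1(N_2)$ is cyclic, so is its quotient $\pi_1(M)$. The simply connected, positively curved manifold $\widetilde M$ contains the totally geodesic submanifolds $\widetilde{N_1},\widetilde{N_2}$ with $k_1\le k_2$, $4k_1\le n+3$, and $k_1+2k_2\le n+1$, and $\widetilde{N_2}$ is a $\Z$-cohomology sphere; so Proposition~\ref{cohm2}(1), applied with $R=\Z$, shows that $\widetilde M$ is a $\Z$-cohomology sphere. As $\pi_1(M)$ is cyclic, Theorem~\ref{ucover}(1) now gives that $M$ is homotopy equivalent to $S^n$, $\RP^n$, or a lens space.

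For part~(2): here $\pi_1(N_2)=1$, so the surjection $\pi_1(N_2)\twoheadrightarrow\pi_1(M)$ forces $\pi_1(M)=1$; thus $M=\widetilde M$ is simply connected and $N_2=\widetilde{N_2}$ is a $\Z$-cohomology complex projective space, in particular it has $2$-periodic $\Z$-cohomology. The hypotheses of Proposition~\ref{cohm2}(2) with $g=2$ are precisely the standing assumptions ($4k_1\le n+3$; $2+k_1+2k_2\le n+3$, that is, $k_1+2k_2\le n+1$; and $2\mid k_1$), so $M$ has $2$-periodic $\Z$-cohomology. One then upgrades this to the statement that $H^*(M;\Z)\cong H^*(\CP^{n/2};\Z)$ as graded rings: $M$ is closed, orientable and simply connected, so $H^2(M;\Z)$ is torsion-free; we may assume $n$ is even (otherwise the conclusion is vacuous), and then $k_1+2k_2\le n+1$ with $k_1\ge 2$ forces $2k_2\le n-2$, so by Lemma~\ref{CL}(1) the restriction $H^2(M;\Z)\to H^2(N_2;\Z)=\Z$ is an isomorphism, whence $H^2(M;\Z)=\Z$. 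The $2$-periodicity then makes the powers of this degree-two class run through the even cohomology, while Poincar\'e duality forces the odd groups to vanish and identifies its top power with a generator of $H^n(M;\Z)$, so $M$ has the cohomology ring of $\CP^{n/2}$. Theorem~\ref{ucover}(2) then gives $M\simeq\CP^{n/2}$.

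I expect the crux to be the last step of part~(2): Proposition~\ref{cohm2}(2) only outputs periodicity of the cohomology, so one must separately argue, via Poincar\'e duality and simple connectivity, that a closed simply connected positively curved manifold with $2$-periodic $\Z$-cohomology and nonzero $H^2$ has the cohomology ring of $\CP^{n/2}$ (thereby also excluding the spherical alternative, in which the periodicity class vanishes); the corresponding step in part~(1) is free, since Proposition~\ref{cohm2}(1) already delivers a cohomology sphere. The other place requiring care is the bookkeeping that the lifts $\widetilde{N_i}\subseteq\widetilde M$ are connected and simply connected — exactly where the connectivity estimates of Lemma~\ref{CL} and the hypothesis $n\ge 5$ are consumed.
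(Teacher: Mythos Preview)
Your proof is correct and follows the same strategy as the paper: lift to the universal cover, use the Connectedness Lemma to identify the lifts of $N_1$ and $N_2$ with their universal covers, apply Proposition~\ref{cohm2} in $\widetilde M$, and finish with Theorem~\ref{ucover}. Your treatment of part~(2) is in fact more careful than the paper's, which asserts directly that $\widetilde M$ is a cohomology $\CP^{n/2}$ from Proposition~\ref{cohm2} without spelling out the passage from $2$-periodicity to the full ring structure or the exclusion of the sphere alternative via $H^2(M)\cong H^2(N_2)\neq 0$.
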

  \begin{proof}
We lift to the universal cover of $M$, 
$\widetilde M$. By the \hyperref[CL]{Connectedness Lemma} the corresponding lifts of $N_1$ and $N_2$ are both at least $2$-connected, and therefore their corresponding lifts to $\widetilde M$ are their universal covers, $\widetilde{N_1}$ and $\widetilde{N_2}$.
By Proposition \ref{cohm2}, it follows that $\widetilde M$ is a cohomology sphere or complex projective space. 
Since $\pi_1(N_2)$ is a finite cyclic group and $N_2$ is at least $2$-connected in $M$, it follows by Theorem \ref{ucover}
 that if $\widetilde M$ is a cohomology sphere then $M$ is homotopy equivalent to $S^n$, 
 $\RP^n$, or a lens space. Likewise, if $\widetilde M$ is a cohomology $\CP^{n/2}$, since $N_2$ is assumed to be  simply connected and is at least $(\frac{n-1}{2})$-connected, then $M$ is simply connected and by Theorem \ref{ucover}, $M$ is homotopy equivalent to 
 $\CP^{n/2}$.

\end{proof}

A similar proof allows us to obtain the following generalization of Theorem \ref{cohm1}
 to non-simply connected manifolds as follows.
\begin{lemma}\label{corwilk}
Let $N^{n-k} \subseteq M^n$ be a totally geodesic embedding of closed,  positively curved manifolds such that $k \leq \frac{n+3}{4}$ and the inclusion $N\hookrightarrow M$ is at least $2$-connected. If $N$ has the cohomology of $S^{n-k}$, $\RP^{n-k}$, $\CP^{(n-k)/2}$,  a lens space, or $\HP^{(n-k)/4}$, then the same holds for $M$, where  $k$ is even in the case of complex projective spaces, and $k$ is divisible by four in the case of quaternionic projective spaces. In particular, in the first four cases, $M$ is homotopy equivalent to $S^{n}$, $\RP^{n}$, $\CP^{n/2}$, or  a lens space, respectively.
\end{lemma}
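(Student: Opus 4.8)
The plan is to mimic the proof of Lemma \ref{combo}, passing to the universal cover $\widetilde M$ of $M$, applying Theorem \ref{cohm1} there, and then using Theorem \ref{ucover} to descend to a homotopy classification of $M$ itself. First I would observe that since the inclusion $N^{n-k}\hookrightarrow M^n$ is at least $2$-connected, we have $\pi_1(N)\cong\pi_1(M)$, and the preimage $\widetilde N$ of $N$ in $\widetilde M$ is connected and is precisely the universal cover of $N$; moreover the induced inclusion $\widetilde N\hookrightarrow\widetilde M$ is still a totally geodesic embedding of the same codimension $k$, and $\widetilde M$ inherits positive curvature. The codimension hypothesis $k\le\frac{n+3}{4}$ is unchanged under lifting.

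Next I would split into cases according to the cohomology type of $N$. If $N$ has the cohomology of $S^{n-k}$, $\CP^{(n-k)/2}$, or $\HP^{(n-k)/4}$, then in the non-simply-connected cases we first note $\widetilde N$ is simply connected; for $\RP^{n-k}$, the universal cover $\widetilde N=S^{n-k}$ is a cohomology sphere, and for a lens space $S^{n-k}/\Z_l$ the universal cover is again $S^{n-k}$, a cohomology sphere. So in every case $\widetilde N$ is a simply connected, closed, positively curved manifold whose cohomology ring is that of a sphere, complex projective space, or quaternionic projective space. Theorem \ref{cohm1} (applied to $\widetilde N\hookrightarrow\widetilde M$, with the codimension bound $k\le\frac{n+3}{4}$) then gives that $\widetilde M$ has the corresponding cohomology ring, with $k$ even in the $\CP$ case and $4\mid k$ in the $\HP$ case, as recorded there. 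This establishes the first sentence of the statement in the $\HP$ case directly (where one simply asserts that $M$, already simply connected in that case since $\HP^{(n-k)/4}$ is, has the cohomology of $\HP^{n/4}$).

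Finally, for the four remaining cases I would descend using Theorem \ref{ucover}. Since $\pi_1(M)\cong\pi_1(N)$ is trivial, $\Z_2$, or cyclic of order $l\ge 3$ according to whether $N$ is a (cohomology) sphere, real projective space, or lens space, and since in these cases $\widetilde M$ is a cohomology sphere, part (1) of Theorem \ref{ucover} gives that $M$ is homotopy equivalent to $S^n$, $\RP^n$, or a lens space $S^n/\Z_l$ — and matching fundamental groups pins down which one. In the $\CP^{(n-k)/2}$ case, $N$ is simply connected, hence so is $M$, and $\widetilde M=M$ has the cohomology of $\CP^{n/2}$, so part (2) of Theorem \ref{ucover} yields $M\simeq\CP^{n/2}$. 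The main thing to be careful about — the only real obstacle — is verifying that the hypotheses of Theorem \ref{cohm1} are genuinely met upstairs: namely that $\widetilde N$ is still closed, totally geodesic, simply connected, positively curved, of codimension $k\le\frac{n+3}{4}$ in $\widetilde M$, which is where the assumed $2$-connectedness of $N\hookrightarrow M$ is essential (it guarantees $\widetilde N$ is connected and simply connected, so that we really are looking at the universal cover of $N$ rather than some disconnected or non-simply-connected lift). Everything else is bookkeeping on fundamental groups.
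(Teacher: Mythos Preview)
Your proposal is correct and follows exactly the approach the paper indicates (``A similar proof'' to Lemma~\ref{combo}): lift to the universal cover using the $2$-connectedness hypothesis so that $\widetilde N$ is the universal cover of $N$, apply Theorem~\ref{cohm1} to $\widetilde N\hookrightarrow\widetilde M$, and then descend via Theorem~\ref{ucover} using that $\pi_1(M)\cong\pi_1(N)$ is cyclic. Your identification of the $2$-connectedness as the crucial ingredient (ensuring the lift of $N$ is connected and coincides with $\widetilde N$) is precisely the point.
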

%%%%%%%%%%%%%%%%%%%%%%%%%
%%%%%%%%%%%%%%%%%%%%%%%%%%
The following lemma, from \cite{KKSS}, gives criteria to obtain a homotopy equivalence classification of fixed point sets of codimension two in closed positively curved manifolds. In particular, the result assuming Condition 1 was proven by Frank, Rong, and Wang \cite{FRW} and that assuming Condition 2 was proven in \cite{KKSS}. The result assuming Condition 3 was originally stated in \cite{KKSS} for actions of $\Z_2$-tori, but the proof, which we leave to the reader, easily generalizes to $\Z_p$-tori, with $p$ a prime.
\begin{Codim2}\label{codim2}
Let $M^{m+2}$ be a closed, positively curved Riemannian manifold. Suppose $N^m$ is a closed, totally geodesic submanifold of $M$.
Assume one of the following holds:
	\begin{enumerate}[font=\normalfont]
	\item \cite{FRW} $M$ is odd-dimensional, of dimension $\geq 5$; 
	\item \cite{KKSS} The inclusion $N^m \subseteq M^{m+2}$ is $m$-connected; or
	\item  $M^{m+2}$ admits a $\Z_p^2$ action such that $N^m$ is a fixed-point set component of a $\Z_p$ subgroup and such that there is a second $\Z_p$ subgroup whose fixed-point set component $N'$ has codimension  at most $\frac{m+1}{2}$;
	\end{enumerate}
then $N^m$ is homotopy equivalent to $S^m$, $\RP^m$,  $\CP^{\frac m 2}$, or a lens space.
In particular, $M^{m+2}$ is homotopy equivalent to $S^{m+2}$, $\RP^{m+2}$,  $\CP^{\frac{m+2}{2}}$, or a lens space.
\end{Codim2}

We now recall the Borel formula from Borel \cite{B}. 

\begin{Borel} \label{borel} \cite{B}
Let $p>2$ be a prime and let $\mathbb{Z}_p^r$ act smoothly on $M$, a Poincaré duality space, with fixed-point set component $F$. Then
$$
\codim (F \subseteq M) \;=\; \sum \codim(F \subseteq F'),
$$
where the sum runs over the fixed-point set components $F'$ of corank one subgroups 
$\mathbb{Z}_p^{r-1} \subseteq \mathbb{Z}_p^{r}$ for which $\dim(F')>\dim(F)$.

These subgroups are precisely the kernels of the irreducible subrepresentations of
the isotropy representation of $\mathbb{Z}_p^r$ on the normal space to $F$. 
In particular, the number of pairwise inequivalent irreducible subrepresentations 
is at least $r$ whenever the action is effective. Moreover, equality holds only if 
the isotropy representation is equivalent to a block diagonal representation of the form
\[
(\eta_1,\dots,\eta_r) \;\longmapsto\;
\operatorname{diag}\bigl(R(\eta_1)^{m_1},\dots,R(\eta_r)^{m_r}\bigr),
\]
where $m_i>0$ denotes the multiplicity of the $i$-th irreducible summand, and
$R(\eta_i)$ is the $2\times 2$ real rotation matrix corresponding to
multiplication by a primitive $p$-th root of unity $\eta_i$. 
Here $R(\eta_i)^{m_i}$ denotes the block diagonal matrix consisting of $m_i$ copies of $R(\eta_i)$.
\end{Borel}

We also record a similar useful observation to Observation 3.6 in \cite{KKSS}. The same proof as in \cite{KKSS}, a consequence of the \hyperref[CL]{Connectedness Lemma} and the \hyperref[borel]{Borel formula}, works here, as the proof does not depend on the value of the prime $p$. 
\begin{observe}\cite{KKSS}\label{borelobserve}
    Let $F_j^{m_j}$ be the fixed-point set of a $\Z_p^{r-j}$-action by isometries on a closed, positively curved manifold, and suppose that $r-j \geq 2.$ If the isotropy representation has exactly $r-j$ irreducible subrepresentations, and if the fixed-point set component $F_{j+1}$ containing $F_j$ of some $\Z_p^{r-j-1}$ has the property that the codimension $k_j$ of the inclusion $F_j \subseteq F_{j+1}$ is minimal, then this inclusion is $\dim(F_j)$-connected.  
\end{observe}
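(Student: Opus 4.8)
The plan is to realize $F_j$ inside the ambient manifold as the intersection of two totally geodesic submanifolds whose codimensions are arranged so that Part (2) of the \hyperref[CL]{Connectedness Lemma} yields \emph{exactly} $\dim(F_j)$-connectivity. Write $W^n$ for the closed, positively curved manifold carrying the $\Z_p^{r-j}$-action, fix $x\in F_j$, set $d:=\dim F_j=m_j$, and let $V:=\nu_x(F_j\subseteq W)$ be the normal space, so $\codim(F_j\subseteq W)=\dim V=n-d$. I would first dispose of the trivial cases: if $F_{j+1}=F_j$ the inclusion is the identity and there is nothing to prove, and if $d=0$ the statement just says $F_{j+1}$ is connected; so I assume $k_j>0$ and $d>0$.

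Next I would use the \hyperref[borel]{Borel Formula} to normalize the isotropy representation. Since $F_j=\operatorname{Fix}(\Z_p^{r-j})$, the isotropy representation of $\Z_p^{r-j}$ on $V$ is effective, and by hypothesis it has exactly $r-j$ pairwise inequivalent irreducible summands; thus we are in the equality case of the \hyperref[borel]{Borel Formula}, so in suitable coordinates on $\Z_p^{r-j}$ the representation is block diagonal, $V\cong\bigoplus_{i=1}^{r-j}\rho_i^{\oplus m_i}$ with $m_i\ge1$, where $\rho_i$ factors through the $i$-th coordinate projection and $H_i:=\ker\rho_i$ is the $i$-th coordinate hyperplane $\cong\Z_p^{r-j-1}$. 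Put $N_i:=\operatorname{Fix}(H_i,W)_x$, a closed totally geodesic submanifold of $W$ containing $F_j$ with $\codim(F_j\subseteq N_i)=\dim\operatorname{Fix}(H_i,V)=2m_i$. Any corank-one subgroup not among $H_1,\dots,H_{r-j}$ has no nonzero fixed vector in $V$, so its fixed component through $x$ is $F_j$ itself; hence the minimal positive codimension among fixed components of corank-one subgroups is $2m_{i_0}$ with $m_{i_0}=\min_i m_i$, and, after relabeling, $F_{j+1}=N_{i_0}$ and $k_j=2m_{i_0}$.

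Now I would choose the second submanifold carefully. Let $K\le\Z_p^{r-j}$ be the $i_0$-th coordinate axis (equivalently $K=\bigcap_{l\ne i_0}H_l$, a copy of $\Z_p$), and set $N:=\operatorname{Fix}(K,W)_x$. Because $K$ acts trivially on $\rho_l$ for $l\ne i_0$ and faithfully on $\rho_{i_0}$, one has $\operatorname{Fix}(K,V)=\bigoplus_{l\ne i_0}\rho_l^{\oplus m_l}$, so $\codim(N\subseteq W)=(n-d)-\sum_{l\ne i_0}2m_l=2m_{i_0}=k_j$. Moreover $K$ and $H_{i_0}$ generate all of $\Z_p^{r-j}$, so the component of $N\cap N_{i_0}=\operatorname{Fix}(\langle K,H_{i_0}\rangle,W)_x$ through $x$ is $F_j$; since $\codim(N\subseteq W)+\codim(N_{i_0}\subseteq W)=2m_{i_0}+\sum_{l\ne i_0}2m_l=n-d<n$, this intersection is connected, so $N\cap F_{j+1}=F_j$. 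Finally I apply Part (2) of the \hyperref[CL]{Connectedness Lemma} in $W$ to $N_1:=N$ (codimension $k_1=2m_{i_0}$) and $N_2:=F_{j+1}=N_{i_0}$ (codimension $k_2=\sum_{l\ne i_0}2m_l$): the requirement $k_1\le k_2$ holds because $m_{i_0}$ is the minimum and there is at least one further index $l\ne i_0$ — this is exactly where $r-j\ge2$ enters — and $k_1+k_2=n-d\le n$. The \hyperref[CL]{Connectedness Lemma} then gives that $F_j=N_1\cap N_2\hookrightarrow N_2=F_{j+1}$ is $(n-k_1-k_2)$-connected, that is, $d$-connected, which is the claim.

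I expect the only delicate points to be: (a) extracting the block-diagonal normal form from the \hyperref[borel]{Borel Formula} and translating ``exactly $r-j$ irreducibles'' into the fact that the corank-one subgroups with strictly larger fixed set are precisely the $H_i$, so $F_{j+1}$ must be one of the $N_i$; and (b) making the \emph{right} choice of auxiliary submanifold: taking instead the obvious candidate $N_{i_1}$ for some $i_1\ne i_0$ only gives codimension $2m_{i_1}\ge 2m_{i_0}$ and hence $(n-2m_{i_0}-2m_{i_1})$-connectivity, which can be strictly weaker than $\dim(F_j)$; using the small subgroup $K$ is what produces a partner of codimension exactly $k_j$. The remaining ingredients — effectiveness of the isotropy representation, total geodesy of fixed-point sets, and connectedness of the intersection — are routine in this setting and independent of the value of $p$.
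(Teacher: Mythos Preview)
Your argument is correct and follows essentially the same route the paper indicates (the paper does not reproduce the proof but cites \cite{KKSS} and says it is ``a consequence of the Connectedness Lemma''): one realizes $F_j$ as the transverse intersection in $W$ of $F_{j+1}=N_{i_0}$ with the fixed set $N$ of the complementary $\Z_p$ in the $i_0$-th coordinate, checks that $\codim(N\subseteq W)=k_j\le\codim(F_{j+1}\subseteq W)$ using minimality of $m_{i_0}$ and $r-j\ge2$, and then Part~(2) of the \hyperref[CL]{Connectedness Lemma} gives $(n-k_1-k_2)=\dim F_j$ connectivity. One cosmetic remark: your use of connectedness of $N\cap F_{j+1}$ already comes from the very application of the \hyperref[CL]{Connectedness Lemma} you are about to make (since $d\ge1$), so the cleanest order is to apply the lemma first, deduce that $N\cap F_{j+1}$ is connected, identify it with $F_j$ via the tangent-space computation, and then read off the conclusion.
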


 \subsection{Bounds for \texorpdfstring{${\bf \Z_p}$}--tori in Positive Curvature}

Fang and Rong 
showed in \cite{FR} that for a sufficiently large prime $p$,
the maximal $\Z_p$-symmetry rank of a closed, simply-connected $2n$-dimensional manifold of positive curvature  is $n$.   
Ghazawneh in \cite{Gh} showed that if instead we assume that the $\Z_p$-torus admits a fixed point, with $p>2$ and an additional technical assumption on fixed point sets of dimension $4$, then for a manifold of {\em any} dimension $n$ the maximal symmetry rank is $\lfloor n/2\rfloor$.

 In \cite{FR}, using their maximal symmetry rank result, they also establish a homeomorphism classification result for such actions on
 a closed, simply-connected even-dimensional manifold of positive curvature.

The following statement is weaker than the corresponding result of \cite{Gh}, but we include it here in order to anchor the induction used in Theorem \ref{Gh+Zp}, where we remove the technical assumption in the result of \cite{Gh}.
\begin{theorem} \label{n/2result}
    Let $M^n$, $n\geq5$ be a closed Riemannian manifold of positive sectional curvature. Suppose that $M$ admits an isometric and effective $\Z_p^k$-action with a fixed point $x \in M, p>2$. Suppose that $k\geq \lfloor n/2\rfloor$, then $k=\floor{n/2}$, and  $M$ is homotopy equivalent to $S^n$, $\RP^n$, $\CP^{n/2}$, or a lens space.
\end{theorem}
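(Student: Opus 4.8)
The plan is to proceed by induction on $n$, using a maximal $\Z_p$-subtorus argument together with the Borel formula and the connectedness/periodicity results gathered in Section 3. Since $k \geq \lfloor n/2 \rfloor$, the maximal symmetry rank result of \cite{Gh} (combined with the technical-assumption-removal that is the point of Theorem \ref{Gh+Zp}, which this theorem is meant to anchor) forces $k = \lfloor n/2 \rfloor$; the substantive content is the homotopy classification. The base cases are low-dimensional: for $n \in \{5,6\}$ one has $k \leq 2$ or $3$, and these can be handled directly by the Codimension Two Lemma \ref{codim2} and Lemma \ref{combo}, or cited from \cite{FR} and \cite{Gh} in even dimensions. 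I would first fix the fixed point $x$ and consider the isotropy representation of $\Z_p^k$ on $T_xM$; since the action is effective and the irreducible real $\Z_p$-representations are $2$-dimensional, the Borel formula (our \hyperref[borel]{Borel Formula}) tells us there are at least $k = \lfloor n/2 \rfloor$ pairwise inequivalent irreducible summands, and since $2k \leq n$, this is exactly $\lfloor n/2 \rfloor$ summands; moreover equality forces the representation to be block-diagonal of the form $\operatorname{diag}(R(\eta_1)^{m_1},\dots,R(\eta_k)^{m_k})$ with each $m_i \geq 1$.

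Next I would extract a $\Z_p$-subgroup $H$ whose fixed-point component $N = M^H_x$ through $x$ has \emph{minimal} codimension $k_1$ among fixed-point components of rank-one subgroups with $\dim > \dim F$, where here $F = \{x\}$ (or more precisely where $F$ is the fixed-point component of the full $\Z_p^k$ at $x$). Because there are exactly $\lfloor n/2\rfloor$ irreducible summands summing to codimension $\leq n$, the minimal one has multiplicity $m_1 = 1$, so $k_1 = 2$ — or else a short counting argument shows $k_1$ is small relative to $n$ — and the residual $\Z_p^{k-1}$ acts on $N$ with a fixed point, again with the minimal possible number of irreducible summands. By Observation \ref{borelobserve}, the inclusion $N \hookrightarrow M$ (and iterating, each step of the flag of fixed-point components) is highly connected; in particular $N^{n-2} \hookrightarrow M^n$ is $(n-2)$-connected. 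Applying the inductive hypothesis to $N$ with its $\Z_p^{k-1}$-action (noting $k-1 \geq \lfloor (n-2)/2 \rfloor$), we get that $N$ is homotopy equivalent to $S^{n-2}$, $\RP^{n-2}$, $\CP^{(n-2)/2}$, or a lens space; then the Codimension Two Lemma \ref{codim2}(2), using the $(n-2)$-connectedness of the inclusion, upgrades this to the same conclusion for $M$. To land the cohomology ring more carefully in higher codimension steps, I would instead run the flag argument so that $N$ has codimension $k_1$ with $4k_1 \leq n+3$ and apply Lemma \ref{corwilk} (or Lemma \ref{combo} and Proposition \ref{cohm2}) rather than only the codimension-two case — the point being that $k_1 \leq \tfrac{n+3}{4}$ holds once $n$ is not too small, since $k_1$ is bounded by the smallest multiplicity-weighted block size and the blocks partition a space of dimension $\leq n$ into $\lfloor n/2 \rfloor$ pieces.

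The main obstacle, I expect, is the bookkeeping at the boundary between the various "lemmas that propagate cohomology up a totally geodesic inclusion": one must verify in each inductive step that the relevant numerical hypotheses ($4k_1 \leq n+3$, $k_1 + 2k_2 \leq n+1$, divisibility of $k_1$ by $2$ or $4$ in the projective-space cases, cyclicity of $\pi_1$) are all satisfied simultaneously, and this requires knowing that the minimal codimension $k_1$ in the flag is genuinely small — which in turn relies on the block-diagonal rigidity statement in the Borel formula and the tight count of $\lfloor n/2\rfloor$ summands. A secondary subtlety is the fundamental-group control: one must track that $\pi_1(N)$ stays cyclic (respectively trivial in the $\CP$ case) so that Theorem \ref{ucover} applies; this follows because $N$ is highly connected in $M$ and one lifts to universal covers as in the proof of Lemma \ref{combo}, but it needs to be threaded through every stage of the induction. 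Once these numerical and $\pi_1$ conditions are checked, the conclusion for $M$ follows immediately from Lemma \ref{corwilk} or Lemma \ref{combo}, completing the induction.
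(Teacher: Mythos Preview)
Your inductive approach is workable, but it contains one misapplication and takes a longer route than the paper. The claim that Observation~\ref{borelobserve} yields $(n-2)$-connectedness of $N^{n-2}\hookrightarrow M^n$ is wrong: that observation gives $\dim(F_j)$-connectedness for an inclusion $F_j\subset F_{j+1}$ in which $F_j$ is fixed by the \emph{larger} group and $F_{j+1}$ by a corank-one subgroup of it; it does not apply to the inclusion of a $\Z_p$-fixed set into $M$ itself (that would force $r-j=1$, violating the hypothesis $r-j\ge 2$). The Connectedness Lemma gives only $(n-3)$-connectedness for $N\subset M$, so Part~(2) of the Codimension Two Lemma cannot be invoked on this pair. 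Your fallback via Lemma~\ref{corwilk} does rescue the argument: once $N^{n-2}$ is classified by induction, $k_1=2\le (n+3)/4$ and the inclusion is at least $2$-connected for $n\ge 5$, so $M$ is classified. Thus the induction closes for $n\ge 7$, with $n=5$ handled by Part~(1) of the Codimension Two Lemma. But your base case $n=6$ cannot be inducted down (since $n-2=4<5$) and cannot be cited from \cite{FR} or \cite{Gh} without circularity or extra hypotheses; it needs its own argument.

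The paper avoids induction entirely and is shorter. For odd $n$ it observes that the isotropy map $\Z_p^k\to\Z_p^{\lfloor n/2\rfloor}$ is a bijection, so some $\tau$ has codimension-two fixed set, and Part~(1) of the Codimension Two Lemma finishes immediately. For even $n$ it passes to a $\Z_p^2$ subgroup, obtains the chain $F^{n-4}\subset F^{n-2}\subset M$ (all multiplicities being one forces these dimensions), applies Observation~\ref{borelobserve} \emph{correctly} to $F^{n-4}\subset F^{n-2}$ (here $r-j=2$, the inclusion has minimal codimension $2$, hence is $(n-4)$-connected), uses Part~(2) of the Codimension Two Lemma to classify $F^{n-2}$, and then Lemma~\ref{corwilk} lifts this to $M$. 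Note that this direct even-dimensional argument is exactly what you would be forced into for your base case $n=6$, so the inductive wrapper buys nothing---the paper's route is strictly cleaner.
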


\begin{proof}
    We first note that for odd dimensions, the result follows directly from Part 1 of the \hyperref[codim2]{Codimension Two Lemma}. 
For even dimensions, we note that  we have the following chain of nested fixed point sets
$$F^{n-4}\subset F^{n-2}\subset M,$$
where $F^{n-4}$ is fixed by $\Z_p^2$ and $F^{n-2}$ is fixed by some $\Z_p\subset \Z_p^2$. Using the \hyperref[borel]{Borel Formula} and Observation \ref{borelobserve}, it follows that $F^{n-4}$ is $(n-2)$-connected in $F^{n-2}$, and by Part 2 of the \hyperref[codim2]{Codimension Two Lemma}, we see that $F^{n-2}$ is homotopy equivalent to $S^{n-2}$, $\RP^{n-2}$, or $\CP^{(n-2)/2}$.
Note that Part~1 of the \hyperref[CL]{Connectedness Lemma} implies that the inclusion $F\hookrightarrow M$ is $(n-3)$-connected; in particular, it is $2$-connected for $n\ge 5$. Because $2\le \frac{n+3}{4}$ when $n\ge 5$ and $N$ has the cohomology of $S^{n-2}$, $\RP^{n-2}$, or $\CP^{(n-2)/2}$, we may apply Corollary~\ref{corwilk} with $k=2$ to conclude that $M$ has the cohomology ring of $S^{n}$, $\RP^{n}$, or $\CP^{n/2}$. Finally, lifting to the universal cover $\widetilde{M}$ of $M$ and using Theorem~\ref{ucover}, we obtain the desired conclusion.

\end{proof}

As mentioned earlier, the following theorem as stated was proven in \cite{Gh} under an extra assumption on the Euler characteristic of any $4$-dimensional fixed point sets of a $\Z_p$-subtorus contained in a chain of fixed point sets each of codimension two in the next, which was required to prove the anchor of the induction. Here we are able to remove that condition. The remainder of the proof is similar to that of \cite{Gh1}, but we include it for the sake of completeness. 
The proof techniques here make use of the tools developed in \cite{KKSS}, and can also be used to significantly streamline the proof of the even-dimensional case in \cite{FR}.

\begin{theorem} \label{Gh+Zp}
Let $M^n$, $n\geq 13$ be a closed Riemannian manifold of positive sectional curvature. Suppose that $M$ admits an isometric and effective $\Z_p^r$-action with a fixed point $x\in M$, $p>2$. If
$$
r\,\geq \begin{cases}
   \lfloor \frac{3n}{8}\rfloor +2 \,\, \textrm{if }\,\, n\equiv 2, 4 \mod{8}\\
   \lfloor \frac{3n}{8}\rfloor +1 \,\, \textrm{, }  
\end{cases}
$$
then $M$ is homotopy equivalent to $S^n$, $\RP^n$, $\CP^{n/2}$, or a lens space.
\end{theorem}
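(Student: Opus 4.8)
The plan is to argue by induction on $n$, with the base of the induction supplied by Theorem~\ref{n/2result}; this is exactly why that theorem is needed in the assumption-free form stated above, since in the course of the induction one may meet fixed-point components carrying a $\Z_p$-torus action of \emph{maximal} rank together with a low-dimensional fixed-point set, and these must be classified unconditionally. Fix the fixed point $x\in M$ and let $F\ni x$ be the component of $M^{\Z_p^r}$ through it. The isotropy representation of $\Z_p^r$ on $\nu_x F$ splits into $2$-dimensional irreducibles, and by the \hyperref[borel]{Borel Formula} there are at least $r$ pairwise inequivalent summands, with
$$\codim(F\subset M)=\sum\codim(F\subset F'),$$
the sum over fixed-point components $F'$ of corank-one subgroups with $\dim F'>\dim F$. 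Each summand is at least $2$, so the presence of at least $r\ge \lfloor 3n/8\rfloor+1$ summands forces most of them to equal $2$.

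I would then use this, together with Observation~\ref{borelobserve}, to extract a chain of fixed-point components
$$F=F_t\subseteq F_{t-1}\subseteq\cdots\subseteq F_1\subseteq F_0=M,$$
where $F_j$ is a component of the fixed-point set of a subgroup $\Z_p^{i_j}$, with $0=i_0<i_1<\cdots<i_t=r$, arranged so that the codimensions $k_j=\codim(F_j\subset F_{j-1})$ are controlled: in particular one can locate an index $j$ for which the codimension $k$ of $F_j$ in $M$ satisfies $4k\le n+3$ (and $k$ is even whenever a complex-projective conclusion is in play), while the residual almost-effective $\Z_p$-torus action on $F_j$ has rank $r'\ge r-i_j$. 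A counting argument on the Borel sum should show $k$ and $r'$ can be arranged simultaneously so that either $r'\ge\lfloor(\dim F_j)/2\rfloor$, in which case Theorem~\ref{n/2result} applies to $F_j$ directly, or $r'\ge\lfloor 3(\dim F_j)/8\rfloor+c(\dim F_j)$ for the function $c$ in the statement, in which case the induction hypothesis applies to $F_j$. Whenever a step of the chain has codimension exactly $2$, I would invoke the \hyperref[codim2]{Codimension Two Lemma}; here it is Condition~3, valid for $\Z_p$-tori, that makes the argument go through without the auxiliary Euler-characteristic hypothesis of \cite{Gh}, the required second $\Z_p$-subgroup with a fixed-point component of codimension at most $(m+1)/2$ being produced by the same Borel counting (intersecting enough kernels of irreducible summands).

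Granting that $F_j$ is homotopy equivalent to $S^{\dim F_j}$, $\RP^{\dim F_j}$, $\CP^{(\dim F_j)/2}$, or a lens space, I would transfer this up the chain. Each inclusion $F_\ell\hookrightarrow F_{\ell-1}$ is highly connected by the \hyperref[CL]{Connectedness Lemma} (with the sharper connectivity coming from Observation~\ref{borelobserve} when a periodicity argument is needed), and the bounds $4k_\ell\le\dim F_{\ell-1}+3$ together with the parity constraints are precisely the hypotheses of Lemma~\ref{corwilk}, Proposition~\ref{cohm2}, and Lemma~\ref{combo}. Applying these in turn yields that $M$ has the cohomology ring of $S^n$, $\RP^n$, $\CP^{n/2}$, or a lens space; passing to the (compact) universal cover and invoking Theorem~\ref{ucover}, as already packaged into Lemmas~\ref{combo} and \ref{corwilk}, upgrades this to the asserted homotopy equivalence.

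The main obstacle is the numerology in the two preceding paragraphs. Since $\lfloor 3n/8\rfloor$ lies strictly below the maximal symmetry rank $\lfloor n/2\rfloor$, one cannot reduce to Theorem~\ref{n/2result} in a single step, and a codimension-$2$ reduction costs one unit of rank while lowering the threshold $\lfloor 3n/8\rfloor$ by only about $3/4$; so the chain must be built so as to harvest the ``excess'' $\sum(k_j-2)$ of the Borel sum and convert it into enough slack for the induction hypothesis to remain valid on the smaller manifold. Calibrating this, and in particular recognizing that the even-dimensional complex-projective case, in which the relevant codimensions are forced to be even, is exactly what necessitates the extra ``$+2$'' in dimensions $n\equiv 2,4\pmod 8$, is the delicate part; the remainder is bookkeeping with the connectivity estimates and the transfer lemmas already established above.
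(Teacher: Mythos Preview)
Your inductive strategy and the tools you cite are correct and align with the paper's approach, but you overcomplicate the reduction step. No chain is needed: at each stage the paper simply takes $N\ni x$ to be a fixed-point component of \emph{maximal} dimension among all $M^{\tau}_x$ with $\tau\in\Z_p^r$ nontrivial. Maximality forces the kernel of the induced $\Z_p^r$-action on $N$ to be exactly $\langle\tau\rangle\cong\Z_p$, so $N$ carries an effective $\Z_p^{r-1}$-action with fixed point $x$; applying the maximal-symmetry-rank bound to $N$ then gives $\dim N\ge 2(r-1)$, whence $\codim N\le n-2(r-1)\le (n+3)/4$. One applies the induction hypothesis directly to $N$ and lifts to $M$ via a single application of Lemma~\ref{corwilk}. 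This dissolves the obstacle you flag in your last paragraph: rather than paying one unit of rank per codimension-two step, you pay exactly one unit of rank for a single step whose codimension may be large enough to absorb all the slack at once.

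The numerology then reduces to a short residue check that $r-1$ meets the threshold for $\dim N$. It holds in all cases except $n\equiv 6\pmod 8$ with $\codim N=2$ (here $\dim N\equiv 4\pmod 8$ requires the ``$+2$'', which is exactly one more than is available). For this residual case the paper does not invoke Condition~3 of the \hyperref[codim2]{Codimension Two Lemma} as you propose; instead it takes a maximal $\Z_p^2$-fixed component $F\subset N$, uses the \hyperref[borel]{Borel Formula} to produce a second codimension-two $N'\supset F$ fixed by a different $\Z_p$, invokes Observation~\ref{borelobserve} to see that $F\hookrightarrow N'$ is $\dim(F)$-connected, and then applies Part~2 of the \hyperref[codim2]{Codimension Two Lemma} to classify $N'$, finishing with Lemma~\ref{corwilk}. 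The same two-case split ($\codim N=2$ versus $\codim N=4$, the latter landing directly on Theorem~\ref{n/2result}) handles the anchor $13\le n\le 16$.
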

\begin{proof}

We prove the result for $n\geq 13$, the first value of $n$ for which the bound in the statement of the theorem is different than the maximal symmetry rank, by total induction on the dimension of $M$. We first establish the anchor of the induction for dimensions $13\leq n\leq 16$.
Let $\Z_p^r$ act on $M^n$. Let $N$ be the maximal component of the fixed point set of a $\Z_p$ subgroup of $\Z_p^r$ that contains the fixed point of the $\Z_p^r$-action. Then, by maximality, $N$ admits an effective, isometric $\Z_p^{r-1}$-action with a fixed point. Since the maximal symmetry rank for any $\Z_p^r$ action on an $n$-manifold is $\lfloor n/2\rfloor$, it follows that $\dim(N)\geq 2(r-1)$. In particular, for dimensions $14$ and $16$, this means that 
$$n-4\leq \dim(N)\leq n-2.$$
In each of these dimensions, there are then two cases to consider: Case 1, where $\dim(N)=n-2$ and Case 2, where $\dim(N)=n-4$.

\noindent{\bf Case 1}: where $\dim(N)=n-2$. When $n$ is odd, the result follows immediately by Part 1 of the 
\hyperref[codim2]{Codimension Two Lemma}. We consider dimensions $14$ and $16$, simultaneously. Let $F$ be a maximal component of the fixed-point set of $\Z_p^2$ that contains the fixed point and is contained in $N$. Since $N$ is of codimension two in $M$, it follows from the \hyperref[borel]{Borel Formula}, that there is another fixed point set component $N'$ corresponding to a distinct $\Z_p$ subgroup of $\Z_p^2$ such that $\codim_{N'}(F)=2$. By Observation \ref{borelobserve}, it follows that $F\hookrightarrow N'$ is $\dim(F)$-connected and hence $F$ and $N'$ are both spheres, real projective spaces, complex projective spaces, or lens spaces. Since $N'\hookrightarrow M$ is at least $3$-connected,  applying Lemma \ref{corwilk} gives us the desired result.

\noindent{\bf Case 2}: where $\dim(N)= n-4$. But in both cases, the induced action of $\Z_p^{r-1}$ on $N$ is of rank $\dim(N)/2$, and hence by Part 1, $N$ must be homotopy equivalent to $S^{n-4}$, $\RP^{(n-4)/2}$, or $\CP^{(n-4)/2}$. Since $4\leq (n-1)/4$ in both cases and hence $N\hookrightarrow M$ is at least $3$-connected in both cases by Part 1 of the \hyperref[CL]{Connectedness Lemma}, we may use Lemma \ref{corwilk} to see that $M$ is as desired.

This proves the induction anchor. Now, let $N$ be defined as above.
Note that $2(r-1)\leq \dim(N)\leq n-2$. For this range of dimensions, $\codim(N)\leq (n+3)/4$. Then $N$ with the induced $\Z_p^{r-1}$-action satisfies the induction hypothesis for the following two cases: case (a), where $n\not\equiv 6 \pmod 8$, and case (b), where $n\equiv 6 \pmod 8$ and $\dim(N)\leq n-4$. Hence in all these cases, $N$ is homotopy equivalent to a sphere, a real projective space, a complex projective space, or a lens space and the desired result for case (a) follows immediately by Lemma \ref{corwilk}, noting that $N\hookrightarrow M$ is at least $(n-5)/2\geq 2$-connected.

It remains to consider the case where $n\equiv 6 \pmod 8$ and $\dim(N)=n-2$, but the proof of this case proceeds exactly as in Case~1 of the induction anchor.

\end{proof}
\begin{remark}
    Theorem \ref{Gh+Zp} above holds for $n\geq 5$, but we state it for $n\geq 13$, since 
for $5 \leq n \leq 12$, the bound in the statement of the theorem equals $\lfloor \frac{n}{2} \rfloor$, which is the maximal symmetry rank. Hence in this range we have $r=\lfloor \frac{n}{2} \rfloor$, and the desired result follows from Theorem~\ref{n/2result}.
\end{remark}

%%%%%%%%%%%%%%%%%%%%%%%%%%%%%%%%
\section{The Proof of Theorem B}\label{4}
%%%%%%%%%%%%%%%%%%%%%%%%%%%%%%%%

We begin by stating an important lemma that gives us two distinct elements of $\Z_p^r$ fixing closed submanifolds of sufficiently small codimension.

Throughout this section, we define the following constants, depending on $p$, for primes $3\leq p\leq 29$.

\begin{align}\label{constants_f}
f_i(p)=\begin{cases}
   \dfrac{1}{2} (1- H_p(J_p(\tfrac14))) & i=1, \\[6pt]
   \log_p\!\left( 
      \dfrac{p(p - 1)(1 - J_p(\tfrac14))(2\pi J_p(\tfrac14))^{1/2}}
            {4J_p(\tfrac14)} 
   \right)
   + \dfrac{2}{13 \ln p} - 2.5 \log_p 2 & i = 2, \\[10pt]
   \dfrac{1}{6\ln p} + \dfrac{1}{\ln p\,(1 - J_p(\tfrac14))} & i=3,\\[6pt]
   \dfrac{1}{\ln p} & i=4,\\[6pt]
   J_p(\tfrac14) & i=5,
\end{cases}
\end{align}

\begin{lemma} \label{lemmaA}
Assume $\Z_p^r$ acts effectively and isometrically on a closed, positively
curved manifold $M^n$ with a non-empty fixed-point set, where $p \in \{3,5,7,11,13,17,19,23,29\}$ and $n \geq 16$. 
Suppose
\[
r \;>\; f_1(p)\,n \;+\; 2.5 \log_p n \;+\; f_2(p)
   \;+\; \frac{f_3(p)}{\,n-1\,}
   \;+\; \frac{f_4(p)}{\,f_5(p)(n-1) - 2},
\]

Then the following hold:
\begin{enumerate}
\item there exists $\tau_1\in\Z^r_p$ such that $\codim(M^{\tau_1})\leq \frac{n+3}{4}$ ; and
\item there exists $\tau_2\in  \Z^r_p$ such that $\codim(M^{\tau_2})
\leq \frac{n+1}{3}$ , where $\tau_2\not\in\langle\tau_1\rangle$.
\end{enumerate}
\end{lemma}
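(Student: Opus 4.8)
\textbf{Proof proposal for Lemma \ref{lemmaA}.}

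The plan is to argue by contradiction: assume that no element $\tau_1 \in \Z_p^r$ satisfies $\codim(M^{\tau_1}) \leq \frac{n+3}{4}$, or that no second element $\tau_2 \notin \langle \tau_1 \rangle$ satisfies $\codim(M^{\tau_2}) \leq \frac{n+1}{3}$, and in either case derive an upper bound on $r$ that contradicts the hypothesis. The key device is the dictionary between the isotropy representation at the fixed point $x \in M^{\Z_p^r}$ and $q$-ary codes: since $p$ is odd, the isotropy representation of $\Z_p^r$ on $T_xM$ decomposes into $2$-dimensional irreducibles, so it yields an injective (not necessarily linear) map $\rho: \Z_p^r \hookrightarrow \Z_p^m$ with $m = \lfloor n/2 \rfloor$, where the Hamming weight $|\rho(\tau)|$ equals $\frac{1}{2}\codim(M^\tau)$ for each nontrivial $\tau$ (more precisely, $\codim(M^\tau) = 2|\rho(\tau)|$, with the image coordinates recording the multiplicities of the irreducible summands not fixed by $\tau$). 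Under the contradiction hypothesis for part (1), every nontrivial $\tau$ has $\codim(M^\tau) > \frac{n+3}{4}$, hence $|\rho(\tau)| > \frac{n+3}{8} > \frac{1}{4}\cdot\frac{n}{2} \geq \frac{1}{4}m$; thus the code $C = \rho(\Z_p^r)$ has minimum relative distance $\delta \geq \frac14$.

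Now I would invoke Corollary \ref{rEB(p)} with block length $m = \lfloor n/2 \rfloor$ and $\delta = \frac14$, which gives $r = R_C \cdot m \leq r(\tfrac14)$ evaluated at block length $m$. The remaining work is bookkeeping: substituting $m = \lfloor n/2 \rfloor$ into the bound of Corollary \ref{rEB(p)}, using $m \geq \frac{n-1}{2}$ to control the error terms (the $\frac{1}{n}$-type terms get \emph{larger} when $n$ is replaced by something smaller, so one must be careful to bound $\frac1m \leq \frac{2}{n-1}$ and $\frac{1}{J_p(\delta)m - 1} \leq \frac{2}{J_p(\delta)(n-1) - 2}$), and collecting the resulting constants into precisely the functions $f_1(p), \dots, f_5(p)$ of Display \ref{constants_f}. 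This should reproduce exactly the function $F(n,p)$ on the right-hand side of \eqref{bound}, so the assumption $r > F(n,p)$ contradicts $r \leq F(n,p)$, establishing part (1).

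For part (2), suppose $\tau_1$ from part (1) exists but no $\tau_2 \notin \langle \tau_1 \rangle$ has $\codim(M^{\tau_2}) \leq \frac{n+1}{3}$. Then, restricting $\rho$ to a complementary $\Z_p^{r-1}$ (a subgroup with $\langle \tau_1 \rangle \oplus \Z_p^{r-1} = \Z_p^r$), every nontrivial element of this $\Z_p^{r-1}$ — and more generally every $\tau \notin \langle\tau_1\rangle$ — has $\codim(M^\tau) > \frac{n+1}{3}$, i.e. $|\rho(\tau)| > \frac{n+1}{6} > \frac{1}{3}\cdot\frac{n}{2}$, so the corresponding code (of dimension $r-1$, block length $m$) has minimum relative distance $\delta \geq \frac13$. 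Applying Corollary \ref{rEB(p)} again, now with $\delta = \frac13$, yields $r - 1 \leq r(\tfrac13)$, and then Proposition \ref{prop:bound-decreasing} gives $r(\tfrac13) < r(\tfrac14)$ (valid since $n \geq 16$, and a fortiori since $m \geq 16$ would be needed — here one should check $m = \lfloor n/2\rfloor \geq 8$ suffices for the proposition's hypothesis, or rework the threshold, since the proposition as stated requires block length $\geq 16$; this is a point to double-check). Hence $r - 1 < r(\tfrac14) \leq F(n,p)$ after the same substitution $m = \lfloor n/2\rfloor$, so $r \leq F(n,p)$, again contradicting the hypothesis. The main obstacle I anticipate is not conceptual but the careful tracking of the $\lfloor n/2 \rfloor$ versus $\frac{n}{2}$ discrepancy and the monotonicity directions of all the lower-order error terms so that the constants in Display \ref{constants_f} come out exactly right; a secondary subtlety is making sure the block-length hypothesis $n \geq 16$ (resp. $m \geq 16$) in Proposition \ref{prop:bound-decreasing} is actually met, which may require either $n \geq 32$ or a minor re-examination of that proposition's proof for smaller block lengths.
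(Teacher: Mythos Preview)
Your approach matches the paper's exactly for Part~(1): the paper also passes to the isotropy code $\rho:\Z_p^r\hookrightarrow\Z_p^m$ with $m=\lfloor n/2\rfloor$, assumes by contradiction that every nontrivial $\tau$ has $\codim(M^\tau)>(n+3)/4$, and invokes Corollary~\ref{rEB(p)} with $\delta=\tfrac14$ to contradict the hypothesis on $r$. For Part~(2) the paper says only that ``the proof is similar, except that we set $\delta=\tfrac13$ and use Proposition~\ref{prop:bound-decreasing}''; your explicit restriction to a complementary $\Z_p^{r-1}$ is the natural way to make this precise, and is in fact more detailed than what the paper writes.

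There is, however, one concrete slip in your Part~(2) endgame. From $r-1\le r(\tfrac13)<r(\tfrac14)\le F(n,p)$ you conclude $r\le F(n,p)$, but $r-1<F(n,p)$ only yields $r<F(n,p)+1$, which does \emph{not} contradict the strict hypothesis $r>F(n,p)$ when $F(n,p)$ is non-integral (as it generically is). Two repairs are available: either extract from the proof of Proposition~\ref{prop:bound-decreasing} the quantitative estimate
\[
r(\tfrac14)-r(\tfrac13)\;\ge\;\frac{1}{12\ln p}\Bigl(\tfrac{m}{2}\ln 4-8-\tfrac{9}{8m}\Bigr),
\]
which exceeds $1$ once the block length $m$ is moderately large (comfortably so for every $n>N(p)$, the only range in which the lemma is actually invoked in Theorem~A); or simply observe that for small $n$ the hypothesis $r>F(n,p)$ is vacuous because $F(n,p)$ exceeds the maximal symmetry rank $\lfloor n/2\rfloor$. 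This second observation also dissolves your flagged concern about Proposition~\ref{prop:bound-decreasing} requiring block length $\ge 16$ while $m=\lfloor n/2\rfloor$ could be as small as $8$: in that regime there is nothing to prove.
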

\begin{proof}
The proof follows along the lines of Lemma 3.1 in \cite{Gh1}, which we include for the convenience of the reader. 

Let $x \in M$ be a fixed point of the $\Z_p^r$-action. The isotropy representation induces an injective homomorphism
$$
\rho : \Z_p^r \to \Z_p^m,
$$
describing the action of $\Z_p^r$ on the tangent space $T_x M$. For each nontrivial $\tau \in \Z_p^r$, the fixed-point set $M^{\tau}$ is a totally geodesic submanifold of $M$, and its codimension at $x$ is given by:
\vspace{-0.2cm}
$$
\operatorname{codim} M^{\tau}_x = 2|\rho(\tau)|,
$$
where $|\rho(\tau)|$ is the Hamming weight of the vector $\rho(\tau) \in \Z_p^m$ and the factor of 2 comes from the fact that the representations of $\Z_p$ are even-dimensional.

We argue by contradiction. Suppose then that
$$
\codim(M^{\tau}) > (n+3)/4, \qquad \forall \tau \in \Z_p^r \setminus \{0\}.
$$
Setting $\delta = \frac{1}{4}$, this corresponds to vectors with weight at least $\delta m$, where $m=\floor{n/2}$. Applying Corollary \ref{rEB(p)} with $\delta = 1/4$, we obtain that
\begin{align*}
r &\leq \left(1 - H_p\left(J_p(\frac{1}{4})\right)\right) \floor{\frac{n}{2}}
+ 2.5\log_p \left(\floor{\frac{n}{2}}\right) 
+ \log_p\!\left( \frac{(p - 1)(1 - J_p\left(\frac{1}{4}\right))}{J_p\left(\frac{1}{4}\right)} \right) \\
&\,\,\,\,\,\,\,\,+ 0.5 \log_p\!\left( 2\pi J_p\left(\frac{1}{4}\right) \right) 
+ \log_p\left(\frac{p}{4}\right) 
+\frac{2}{13 \ln p} \\
&\,\,\,\,\,\,\,\,+ \frac{1}{\floor{\frac{n}{2}}}\left(\frac{1}{12\ln p}
   +\frac{1}{(2 \ln p)(1 - J_p\left(\frac{1}{4}\right))}\right) 
+ \frac{1}{(2\ln p)J_p\left(\frac{1}{4}\right)\floor{\frac{n}{2}} - 2 \ln p}.
\end{align*}
a contradiction to the assumption on $r$.
Thus, there exists $\tau_1 \in \Z_p^r$ such that:
\[
\codim M^{\tau_1}_x \leq \frac{n+3}{4}.
\]This completes the proof of Part (1).

The proof of Part (2) is similar, except that we set $\delta=\frac{1}{3}$ and use 
Proposition \ref{prop:bound-decreasing} to get our result.

\end{proof}
\vspace{-.75cm}
\begin{remark} \label{remA}
    Assuming the inductive hypothesis in the proof of the subsequent theorem, in the situation of Lemma \ref{lemmaB}, if $\pi_1(M)=0$, $N_1^{n-k_1} \subset M_x^{\tau_1}$, and $N_2^{n-k_2} \subset M_x^{\tau_2}$, then $k_1$ and $k_2$ satisfy the hypothesis of Parts (1) and (2) of Proposition \ref{cohm2}.
\end{remark}

We now have all the tools needed to prove Theorem \ref{main}. We restate the theorem for the convenience of the reader.

\begin{theorem}
Assume $\Z_p^r$ acts effectively and isometrically on a closed, positively
curved manifold $M^n$ with a fixed-point $x\in M$, where $p \in \{3,5,7,11,13,17,19,23,29\}$ and $n \geq 16$. 
Suppose
$$
r \;>\; f_1(p)\,n \;+\; 2.5 \log_p n \;+\; f_2(p)
   \;+\; \frac{f_3(p)}{\,n-1\,}
   \;+\; \frac{f_4(p)}{\,f_5(p)(n-1) - 2}.
$$
Then $M$ is homotopy equivalent to $S^n$, $\RP^n$, $\CP^{n/2}$, or a lens space.
\end{theorem}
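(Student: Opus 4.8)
The plan is to argue by strong induction on $n\ge 16$. The skeleton is: use Lemma~\ref{lemmaA} to produce a totally geodesic submanifold of small codimension carrying almost all of the $\Z_p^r$-symmetry, classify that submanifold (by the inductive hypothesis, or by Theorems~\ref{Gh+Zp} and~\ref{n/2result} in the base dimensions), and then transport the conclusion back up to $M$ via the Connectedness-Lemma machinery --- Lemma~\ref{corwilk}, Proposition~\ref{cohm2} through Remark~\ref{remA}, Lemma~\ref{combo}, or Part~(3) of the \hyperref[codim2]{Codimension Two Lemma}, whichever applies. Since all of these output the homotopy type of $M$ directly, no separate reduction to the simply connected case is needed. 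For the setup I would fix a fixed point $x$ and let $N_1=M^{\sigma}_x$ be a component of maximal dimension among the fixed-point-set components through $x$ of the rank-one subgroups $\langle\sigma\rangle\le\Z_p^r$; by maximality $N_1$ is a closed, positively curved, totally geodesic submanifold on which $\Z_p^{r-1}=\Z_p^r/\langle\sigma\rangle$ acts effectively and isometrically with fixed point $x$, and $k_1:=\codim(N_1)=2|\rho(\sigma)|$ is even. Since $r>F(n,p)$, Lemma~\ref{lemmaA}(1) forces $k_1\le (n+3)/4$, and, taking $\tau_1:=\sigma$ as the witness for part~(1), Lemma~\ref{lemmaA}(2) supplies $\tau_2\notin\langle\sigma\rangle$ with $k_2:=\codim(M^{\tau_2}_x)\le (n+1)/3$; set $N_2=M^{\tau_2}_x$. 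A short computation using $n\ge 16$ gives $4k_1\le n+3$, $k_1+2k_2\le n+1$ and hence $2+k_1+2k_2\le n+3$, so the codimension hypotheses of Lemma~\ref{corwilk}, Proposition~\ref{cohm2}, Lemma~\ref{combo} and the \hyperref[codim2]{Codimension Two Lemma} all hold; moreover, by Part~(1) of the \hyperref[CL]{Connectedness Lemma}, $N_1\hookrightarrow M$ is $(n-2k_1+1)$-connected, hence at least $2$-connected since $k_1\le (n+3)/4\le (n-1)/2$.

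For the inductive step I distinguish two cases according to $k_1$. \textbf{Case $k_1=2$.} Then $m:=n-2=\dim N_1$, the subgroup $\langle\sigma,\tau_2\rangle\cong\Z_p^2$ fixes $N_1$ as a component of $\operatorname{Fix}(\sigma)$ and has $N_2$ as the fixed-point component of a second $\Z_p$-subgroup with $\codim N_2=k_2\le(n+1)/3\le(m+1)/2$ for $n\ge 5$; Part~(3) of the \hyperref[codim2]{Codimension Two Lemma} then directly gives that $M$ is homotopy equivalent to $S^n$, $\RP^n$, $\CP^{n/2}$, or a lens space. \textbf{Case $k_1\ge 4$.} Now $\dim N_1=n-k_1\ge(3n-3)/4$, and I would verify $r-1>F(n-k_1,p)$: in $F(n,p)-F(n-k_1,p)$ the linear part contributes $f_1(p)\,k_1\ge 4f_1(3)>1$, the term $2.5(\log_p n-\log_p(n-k_1))$ is nonnegative, and the two $O(1/n)$ corrections are negligible in any instance for which the hypothesis on $r$ is satisfiable. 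Applying the inductive hypothesis to $N_1$ --- legitimate because a non-vacuous instance forces $n$, hence $\dim N_1\ge(3n-3)/4$, large enough that $\dim N_1\ge 16$, while the finitely many instances with $\dim N_1<16$ are vacuous or are covered by Theorems~\ref{Gh+Zp} and~\ref{n/2result} --- shows $N_1$ is homotopy equivalent to $S^{n-k_1}$, $\RP^{n-k_1}$, $\CP^{(n-k_1)/2}$, or a lens space; in particular $N_1$ has the cohomology of one of these and is never a cohomology $\HP$. Since $k_1\le(n+3)/4$ is even and $N_1\hookrightarrow M$ is at least $2$-connected, Lemma~\ref{corwilk} upgrades this to $M$ (when $\pi_1(M)=0$ one may instead invoke Proposition~\ref{cohm2} via Remark~\ref{remA} together with $N_2$, or Lemma~\ref{combo} using $\widetilde{N_2}$), and $M$ is homotopy equivalent to $S^n$, $\RP^n$, $\CP^{n/2}$, or a lens space. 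The base case $n=16$, and more generally all small $n$, is vacuous or subsumed by Theorem~\ref{Gh+Zp}, so the induction closes.

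The main obstacle is entirely in the arithmetic of the inductive step: one must check, uniformly over $p\in\{3,\dots,29\}$ and over every $n$ admitting an allowed $r$, that descending from $M^n$ to $N_1^{\,n-k_1}$ preserves the hypothesis (i.e.\ $r-1>F(n-k_1,p)$), or else lands in the regime covered by Theorem~\ref{Gh+Zp}/\ref{n/2result}, or is vacuous. In particular the pole of the last summand $\tfrac{f_4(p)}{f_5(p)(n-1)-2}$ of $F$ near $n\approx 1+2/J_p(\tfrac14)$ must be tracked carefully, precisely to confirm that every instance with $\dim N_1$ near $16$ is in fact vacuous (comparing against the maximal symmetry rank $\lfloor n/2\rfloor$). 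Well-foundedness of the recursion then rests on the observation that the $k_1=2$ branch terminates it via the Codimension Two Lemma, while the $k_1\ge 4$ branch strictly decreases the dimension by at least $4$ and, because non-vacuity forces $n$ large, never falls out of the range $n\ge 16$ before reaching the $k_1=2$ branch.
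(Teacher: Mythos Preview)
Your proof follows the same inductive skeleton as the paper: anchor the small dimensions via Theorem~\ref{Gh+Zp}, produce small-codimension submanifolds by Lemma~\ref{lemmaA}, classify one of them by induction, and lift back to $M$. The one structural difference is that you induct on $N_1$ (codimension $k_1\le(n+3)/4$) and lift with Lemma~\ref{corwilk}, whereas the paper inducts on $N_2$ (codimension $k_2\le(n+1)/3$) and, since $k_2$ may exceed $(n+3)/4$, must invoke the two-submanifold Lemma~\ref{combo}; your route is therefore a bit cleaner in the $k_1\ge4$ case and needs $N_2$ only for the codimension-two branch. In that branch, your appeal to Part~(3) of the \hyperref[codim2]{Codimension Two Lemma} (with $N'=N_2$ and $k_2\le(n+1)/3\le(n-1)/2$) is a valid simplification over the paper's argument via Observation~\ref{borelobserve} and Part~(2). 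Your arithmetic is less explicit than the paper's --- it anchors the induction at a specific $N(p)$ (e.g.\ $N(3)=91$, $N(29)=145715$) below which $F(n,p)$ dominates the bound of Theorem~\ref{Gh+Zp}, rather than arguing vacuity ad hoc --- but your identification of $4f_1(p)\ge4f_1(3)>1$ as the key to the descent inequality, and of the pole in the $f_4$-term as the only delicate point near the bottom of the range, is correct.
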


\begin{proof}
    This is a streamlined version of the proof used to prove the Main Theorem in \cite{Gh1}, with a few minor modifications.  We include it here for the convenience of the reader. The proof is by total induction on dimension. The anchor of the induction is for dimensions $\leq N(p)$, where $N(p)$ is given in the table below.
    \begin{table}[h] \label{tbl2}
\centering
\renewcommand{\arraystretch}{2.0}
\begin{adjustbox}{max width=\textwidth}
\begin{tabular}{|c||c|c|c|c|c|c|c|c|c|c|c|c|c|c|}
\hline
$p$
& 3 & 5 & 7 & 11 & 13 & 17 & 19 \\ 
\hline\hline
$N(p)$
& $91$
& $123$
& $163$
& $259$
& $351$
& $531$
& $699$\\
\hline
\end{tabular}
\end{adjustbox}
\caption{Values for $N(p)$ for small primes}
\label{tab:placeholder2}
\end{table}

For $n\leq N(p)$, it can be checked that $$\; f_1(p)\,n \;+\; 2.5 \log_p n \;+\; f_2(p)
   \;+\; \frac{f_3(p)}{\,n-1\,}
   \;+\; \frac{f_4(p)}{\,f_5(p)(n-1) - 2}$$ $$> \begin{cases}
   \lfloor \frac{3n}{8}\rfloor +2 \,\, \textrm{if }\,\, n\equiv 2, 4 \mod{8}\\
   \lfloor \frac{3n}{8}\rfloor +1 \,\, \textrm{otherwise, }\\
\end{cases}$$ and thus our result holds by Theorem \ref{Gh+Zp}. So we assume $n>N(p)$ for $3\leq p\leq 19$. We suppose our result holds for all dimensions less than $n$ and show that it also holds for dimension $n$. Let $x$ be a fixed-point of the $\mathbb{Z}_p^r$-action.

By Part (1) of Lemma \ref{lemmaA}, there exists a non-trivial $\tau_1 \in \mathbb{Z}_p^r$ and $x \in N_1^{n-k_1} \subset$ Fix($M;\tau_1$) such that $k_1 \leq \frac{n+3}{4}$ and $N_1$ is of maximal dimension. By maximality of $N_1$, the kernel of the $\mathbb{Z}_p^{r}$-action is $\mathbb{Z}_p$ and hence $N_1$ admits an effective $\mathbb{Z}_p^{r-1}$-action with $x \in N_1$.

By Part (2) of Lemma \ref{lemmaA}, there exists a second non-trivial $\tau_2 \in \mathbb{Z}_p^r$ such that $\tau_2 \neq \langle\tau_1\rangle$ and $x \in N_2^{n-k_2} \subset$ Fix($M;\tau_2$) such that $k_2 \leq \frac{n+1}{3}$ and $N_2$ is of maximal dimension. By maximality of $N_2$, the kernel of the $\mathbb{Z}_p^r$-action is $\mathbb{Z}_p$ and hence $N_2$ admits an effective $\mathbb{Z}_p^{r-1}$-action with $x \in N_2$. Without loss of generality, assume that $k_1 \leq k_2$. We now proceed to argue according to the codimension of $N_2$. We have two cases (since the fixed-point sets of $\Z_p^r$ have even codimension): Case 1, where $k_2 \geq 4$; Case 2, where $k_2 = 2$. We begin with Case 1.
    
\noindent\textbf{Case 1: $k_2 \geq 4$}. Recall that $\mathbb{Z}_p^{r-1}$ acts effectively and isometrically on $N_2^{n-k_2}$.

Using the fact that $$r > \; f_1(p)\,n \;+\; 2.5 \log_p n \;+\; f_2(p)
   \;+\; \frac{f_3(p)}{\,n-1\,}
   \;+\; \frac{f_4(p)}{\,f_5(p)(n-1) - 2}$$ and $k_2\geq 4$, a computation gives
   $$r-1 > \; f_1(p)\,(n-k_2) \;+\; 2.5 \log_p (n-k_2) \;+\; f_2(p)
   \;+\; \frac{f_3(p)}{\,(n-k_2)-1\,}
   \;+\; \frac{f_4(p)}{\,f_5(p)((n-k_2)-1) - 2} $$

Hence, the induction hypothesis is satisfied and it then follows that $N_2$ is homotopy equivalent to $S^{n-k_2}$, $\RP^{n-k_2}$, $\CP^{(n-k_2)/2}$, or a lens space. Then the universal cover of $N_2$ satisfies the hypotheses of Lemma \ref{combo}and the result follows. This completes the proof of  Case 1.

\noindent\textbf{Case 2: $k_2 \leq 2$.} Since $k_1 \leq k_2$, it follows that $k_1=k_2=2$. As we saw in the proof of Case 1 of Theorem \ref{Gh+Zp}, it then follows that $N_1$ and $N_2$ intersect transversely.

From Observation \ref{borelobserve}, we have that $N_1 \cap N_2$ is dim($N_1 \cap N_2$)-connected in $N_2$. It then follows from Part (2) of the Codimension Two Lemma \ref{codim2} that $M$ is homotopy equivalent to $S^n$, $\RP^n$, $\CP^{n/2},$ or a lens space.
This completes the proof of Case 2. 
\end{proof}

\section{The Proof of Theorem A}\label{5}

As in the previous section, we begin by proving an important lemma that gives us two distinct elements of $\Z_p^r$ fixing closed submanifolds of sufficiently small codimension.

\begin{lemma} \label{lemmaB}
Assume $\Z_p^r$ acts effectively and isometrically on a closed, positively
curved manifold $M^n$ with a non-empty fixed-point set, where $p \ge 3$ and $n \geq 13$. 
Suppose
\[
r \;>\; \frac{3p-4}{8(p-1)}\,n \;+\; \log_p(n) \;+\; \log_p\!\left(\frac{p^{3}}{8}\right).
\]

Then the following hold:
\begin{enumerate}
\item there exists $\tau_1\in\Z^r_p$ such that $\codim(M^{\tau_1})\leq \frac{n+3}{4}$ ; and
\item there exists $\tau_2\in  \Z^r_p$ such that $\codim(M^{\tau_2})
\leq \frac{n+1}{3}$ , where $\tau_2\not\in\langle\tau_1\rangle$.
\end{enumerate}
\end{lemma}

\begin{proof}
The argument is identical to that of Lemma~\ref{lemmaA}, with the only
modification being the substitution of the Plotkin estimate in Corollary \ref{rPlotkin(p)} at $\delta=\tfrac14$ and $\delta=\tfrac13$.  Under the
numerical hypothesis
\[
r \;>\; \frac{3p-4}{8(p-1)}\,n \;+\; \log_p(n) \;+\;
\log_p\!\left(\frac{p^{3}}{8}\right),
\]
by Proposition \ref{monotone2} and
 as in the proof of Lemma~\ref{lemmaA}, the bounds $r_{\mathrm{Pl}}(1/4)$ and $r_{\mathrm{Pl}}(1/3)$ guarantee the existence of an element
$\tau_1\in\Z_p^r$ with $\codim(M^{\tau_1})\le (n+3)/4$, and a second
element $\tau_2\notin\langle\tau_1\rangle$ with
$\codim(M^{\tau_2})\le (n+1)/3$.  The remainder of the proof is exactly the same as the proof of Lemma \ref{lemmaA}.
\end{proof}

We now have all the tools needed to prove Theorem \ref{main2}. We restate the theorem for the convenience of the reader.

\begin{theorem}
Assume $\Z_p^r$ acts effectively and isometrically on a closed, positively
curved manifold $M^n$ with a fixed-point $x\in M$, where $p \ge 3$ and $n \geq 13$. 
Suppose
\begin{equation}\label{eq:plotkin-rank}
r \;>\; \frac{3p-4}{8(p-1)}\,n \;+\; \log_p(n) \;+\; \log_p\!\left(\frac{p^{3}}{8}\right).
\end{equation}
Then $M$ is homotopy equivalent to $S^n$, $\RP^n$, $\CP^{n/2}$, or a lens space.
\end{theorem}

\begin{proof}
By Proposition \ref{LambertW}, for each prime $p \ge 3$
and every integer $13 \le n \le \lfloor n_{+}(p)\rfloor$, the bound in
Theorem~\ref{Gh+Zp} is smaller than the Plotkin-based rank
assumption~\eqref{eq:plotkin-rank}.
Thus, throughout this finite range of dimensions, the hypothesis
\eqref{eq:plotkin-rank} automatically implies the hypothesis of
Theorem~\ref{Gh+Zp}, providing the base of the induction.

For $n > \lfloor n_{+}(p)\rfloor$, the remainder of the argument proceeds
exactly as in the proof of Theorem~\ref{main}, with
Lemma~\ref{lemmaB} used in place of Lemma~\ref{lemmaA}.  This completes
the proof.
\end{proof}

\end{document}